\def\theequation{\@arabic\c@equation}
\newcommand{\bp}{{\mathbf{p}}}
\newcommand{\bq}{{\mathbf{q}}}
\newcommand{\bbR}{{\mathbb{R}}}
\newcommand{\R}{{\mathbb{R}}}
\newcommand{\cB}{{\mathcal B}}
\newcommand{\cD}{{\mathcal D}}
\newcommand{\cH}{{\mathcal H}}
\newcommand{\cM}{{\mathcal M}}
\newcommand{\cT}{{\mathcal T}}
\newcommand{\cV}{{\mathcal V}}
\newcommand{\cX}{{\mathcal X}}
\newcommand{\lb}{\label}
\newcommand{\sgn}{\text{\rm{sign }}}
\renewcommand{\ln}{\text{\rm ln}}
\newcommand{\dist}{\operatorname{dist}}
\newcommand{\mi}{\operatorname{Mas}}
\newcommand{\mo}{\operatorname{Mor}}
\numberwithin{equation}{section}
\renewcommand{\det}{\operatorname{det}}
\newcommand{\dom}{\operatorname{dom}}
\newcommand{\codim}{\operatorname{codim}}
\newcommand{\Sp}{\operatorname{Sp}}
\renewcommand{\ker}{\operatorname{ker}}
\newcommand{\diag}{\operatorname{diag}}
\theoremstyle{plain}
\newtheorem{theorem}{Theorem}[section]
\newtheorem{hypothesis}[theorem]{Hypothesis}
\newtheorem{lemma}[theorem]{Lemma}
\newtheorem{proposition}[theorem]{Proposition}
\theoremstyle{definition}
\newtheorem{definition}[theorem]{Definition}
\newtheorem{remark}[theorem]{Remark}
\begin{document}
\allowdisplaybreaks

\title{Instability of pulses in gradient reaction--diffusion systems: A symplectic approach.}

\author{M.\ Beck\thanks{mabeck@math.bu.edu}, G.\ Cox\thanks{gcox@mun.ca}, C.\ Jones\thanks{ckrtj@email.unc.edu}, Y.\ Latushkin\thanks{latushkiny@missouri.edu}, K.\ McQuighan\thanks{kmcquigh@bu.edu} and A.\ Sukhtayev\thanks{sukhtaa@miamioh.edu}}

\date{\today}

\maketitle

%%%%%%%%%%%%%%%%%%%%%%%%%%%%%%%%%%%
\begin{abstract}
In a scalar reaction--diffusion equation, it is known that the stability of a steady state can be determined from the Maslov index, a topological invariant that counts the state's critical points. In particular, this implies that pulse solutions are unstable. We extend this picture to pulses in reaction--diffusion systems with gradient nonlinearity. In particular, we associate a Maslov index to any asymptotically constant state, generalizing existing definitions of the Maslov index for homoclinic orbits. It is shown that this index equals the number of unstable eigenvalues for the linearized evolution equation. Finally, we use a symmetry argument to show that any pulse solution must have nonzero Maslov index, and hence be unstable.
\end{abstract}

%\begin{fmtext}
 
\section{Introduction}\lb{s:intro}

Consider an $n \times n$ system of reaction--diffusion equations
\[
	u_t = D u_{xx} + G(u), \quad u\in\R^n,
\]
where $G\colon \bbR^n\to\bbR^n$ and $D$ is a positive diagonal matrix. We are interested in relating the stability of a steady state $\varphi^*$ to its underlying geometric structure. Recall that $\varphi^*$ is said to be spectrally unstable if the linear operator
\[
	L = D \frac{d^2}{dx^2} + \nabla G(\varphi^*(x))
\]
has any spectrum in the open right half-plane. Note that for a reaction--diffusion equation, the existence of unstable spectrum is enough to guarantee nonlinear instability; see \cite{SS98}.

%\end{fmtext}

 Since $L \varphi^*_x = 0$, we know that 0 is an eigenvalue of $L$ with eigenfunction $\varphi^*_x$, provided the state has suitable decay at $\pm\infty$. Thus in the scalar case $n=1$, Sturm--Liouville theory tells us that the number of unstable eigenvalues equals the number of zeros of $\varphi^*_x$. In particular, if $\varphi^*$ is a pulse, meaning $\varphi^*(x) \to 0$ as $|x| \to \infty$, it must have at least one critical point, and hence be unstable; c.f. \cite[\S 2.3]{KP}. However, when $n>1$, the results of Sturm--Liouville theory no longer apply, and it is not as easy to determine the stability of a pulse solution.

The question of pulse stability arose initially from a discussion of Turing patterns, which are spatially periodic patterns that arise in reaction--diffusion systems when a homogeneous background state becomes unstable in the presence of diffusion. In all contexts in which such patterns are physically observable, the diffusion coefficients are not all equal (and in fact are quite different in size). It has been conjectured that this is a necessary condition for the stability, and hence observability, of Turing patterns. In many situations, the stability of a spatially periodic pattern can be related to the stability of a nearby pulse. Thus, if one could prove that pulse solutions are necessarily unstable when the diffusion coefficients are all equal, it would provide a route to resolving the Turing pattern conjecture for periodic solutions.

In this paper we establish the instability of pulse solutions when $n>1$, under the assumption that the nonlinearity satisfies $G = \nabla F$ for some $F$. In this case the linearized operator $L$ is selfadjoint. This is not a setting in which Turning patterns arise, as the standard mechanism for their creation---a stable, homogeneous state destabilized by the addition of diffusion---does not occur in gradient systems \cite{T52}. Thus our result does not address the original Turing pattern conjecture, but we feel it is interesting in its own right. Our proof exploits the gradient structure of $G$ by recasting the eigenvalue problem for $L$ as a first-order Hamiltonian system. At present we do not know how to extend this analysis to an arbitrary (i.e. nongradient) nonlinearity $G$.

The starting point for our proof is Arnold's symplectic interpretation of Sturm--Liouville theory \cite{Arn85}. Arnold observed that for a system of differential equations, a useful generalization of the notion of ``oscillation" (or number of zeros) is given by the Maslov index, a topological invariant counting signed intersections of Lagrangian planes in a symplectic vector space.

Arnold's analysis is local in space. That is, he uses the symplectic picture to formulate and prove comparison-type results about oscillations and the interlacing of zeros for Hamiltonian systems. In the first part of the paper, we develop a global version of this picture, in which the number of unstable eigenvalues for a differential operator on a half-line $(-\infty,L]$, or the full line $\bbR$, is equated to the Maslov index of a family of Lagrangian planes. A similar analysis has been carried out in \cite{CH07,HLS2,HP17}. However, in those papers the Maslov index is defined using a different family of Lagrangian planes than is needed for our current application, so those results cannot be applied directly, and we must develop a new (though closely related) framework for the problem at hand. It is worth pointing out that these results hold for any value of $n$ (i.e. for a single equation or a system of any dimension).
%Our construction uses a different pair of Lagrangian planes, so the existing results cannot be applied directly, and we must develop a new (though closely related) framework for the problem at hand. These technicalities are justified by the fact that our index is computable for a very general class of problems; this computation is the key ingredient in our application to pulse solutions in reaction--diffusion systems. It is worth pointing out that these results hold for any value of $n$ (i.e. for a system of any dimension, or a single equation).

Having related the number of unstable eigenvalues to the Maslov index, our next task is to compute the latter quantity. This is where the difference between the system and scalar cases becomes apparent. An explicit computation of the Maslov index requires $n$ linearly independent solutions to the equation $Lu = 0$. The derivative $\varphi^*_x$ of the steady state is always such a solution. Thus, when $n=1$, $\varphi^*$ contains all of the information needed to evaluate the Maslov index and determine whether or not $L$ has any unstable spectrum. This simple fact is the basis for the Sturm oscillation theorem.

When $n>1$, we cannot compute the Maslov index without knowing the additional $n-1$ solutions. However, in the case of a pulse, we can use a geometric argument to give a lower bound on the Maslov index, thus proving that at least one unstable eigenvalue exists. It is here that the advantages of our definition of the Maslov index (and in particular our choice of reference plane) become apparent. In particular, our definition is such that a zero in the derivative of the pulse will generate an unstable eigenvalue. This property is not enjoyed by previous definitions of the Maslov index for homoclinic orbits, which measure intersections between stable and unstable subspaces, much like the Evans function.

For the last few decades, the Evans function has been the primary tool for determining the stability of coherent structures in a range of partial differential equations, including reaction--diffusion equations but also many others, such as nonlinear Schr\"odinger equations and KdV-type equations \cite{S02}. However, the Evans function has not been shown to be useful for problems in multiple spatial dimensions. Recently, a series of papers have focused on developing the Maslov index as an alternative tool for analyzing stability in the multidimensional setting. Although that theory has seen much important progress \cite{CJLS, CJM1, CJM2, DJ11}, to our knowledge there is only one practical application of the Maslov index to determine stability in a setting where the Evans function does not also apply. This is in \cite{CH14}, where the Maslov index is used to prove that the standing pulse solution of the FitzHugh--Nagumo equation, with diffusion in both variables, is unstable, provided certain assumptions on the model parameters are satisfied. The analysis in \cite{CH14} is particular to the FitzHugh--Nagumo model, and relies heavily on the activator--inhibitor structure in evaluating the Maslov index. The results in the current paper are valid for a general class of equations, only requiring a certain generic assumption, and so we feel that this is a significant contribution towards demonstrating the utility of the Maslov index in a setting where the Evans function has not been used to obtain the same result. 

There is another context in which it was shown that pulse solutions are unstable for a class of systems of PDEs, namely the viscous conservation laws studied in \cite[Equation (4.1)]{GZ98}. In those systems of two equations, it was shown that homoclinic orbits, which correspond to undercompressive shocks, are necessarily unstable. Their proof involves a necessary condition for stability that is derived using the Evans function, and it is not clear how to utilize that method for the systems we consider here.
%In addition, in \cite{CH07}, the Maslov index is defined for homoclinic orbits of a general class of Hamiltonian systems that is directly related to those we study here. However, there are two key differences between that work and ours: first, as with \cite{HLS2}, the Maslov index is defined using a different family of Lagrangian planes than is needed for our current application; second, in \cite{CH07} the Maslov index is shown to be equal to a relative Morse index, which is slightly different, although closely related, to the standard Morse index considered here. Although we believe there is a direct connection between \cite{CH07} and the current work, determining the associated details is nontrivial. Moreover, it is not clear that the results of \cite{CH07} could be used for the application we consider here.  
Further references on the Maslov index and the stability of pulses can be found in \cite{CH14}.

%We now outline our main results and provide more details on the structure of the paper. 

{\bf Notation:} We let $\langle\cdot\,,\cdot\rangle$ denote the real Euclidean scalar product of vectors, and let $\top$ denote transposition. For an $x$-dependent $n$-dimensional matrix $A\colon \R \to \R^{n\times n}$ we define the norm
$$
\| A \|_\infty = \sup_{x\in\R}\sup_{\substack{v\in\R^n\\\|v\|=1}} | A(x)v |.
$$
When $a=(a_i)_{i=1}^n\in\bbR^n$ and $b=(b_j)_{j=1}^m\in\bbR^m$ are $(n\times 1)$ and $(m\times 1)$ column vectors,  we use the notation $(a,b)^\top$ for the $(n+m)\times 1$ column vector with the entries $a_1,\dots,a_n,b_1,\dots,b_m$ (avoiding the use of $(a^\top,b^\top)^\top$). We denote by $\cB(\cX)$ the set of linear bounded operators on a Hilbert space $\cX$ and by $\Sp(T)=\Sp(T; \cX)$ the spectrum of an operator on $\cX$.

%%%%%%%%%%%%%%%%%%%%%%%%%%%%%%%%%%%%%%%%%%%% 
 
\subsection{Outline of main results and structure of paper} \lb{s1}

We describe a symplectic approach to counting negative eigenvalues of a second order differential operator on the real line, that is, the values $\lambda < 0$ for which there exists a nontrivial solution to the eigenvalue problem
\begin{equation}\label{ShEq}
	Hu := -Du'' + V(x)u = \lambda u, \quad D=\diag\{d_i\}>0,\quad u\in\bbR^n, \; x\in\R
\end{equation}
where 
\begin{equation}\lb{defD}
	\dom(H)= H^2(\R;\R^n)
\end{equation}
and $V(x)\in\R^{n\times n}$ satisfies the following hypotheses.
\begin{hypothesis} \lb{h1}
\leavevmode
\renewcommand{\labelenumi}{{(H\arabic{enumi})}}
\begin{enumerate}\item  The potential $V\in C(\bbR,\bbR^{n\times n})$ takes values in the set of symmetric matrices with real entries.
\item The limits $\lim_{x\to\pm\infty}V(x)=V_\pm$ exist, and are positive-definite matrices, i.e. $\Sp(V_\pm)>0$.
\item The functions $x\mapsto \big(V(x)-V_\pm\big)$ are in $L^{1}(\bbR_\pm;\R^{n\times n})$.
\end{enumerate}
\end{hypothesis}

Hypothesis (H1) ensures that the spectrum for \eqref{ShEq} is real. Hypothesis (H2) is a necessary and sufficient condition to ensure that the essential spectrum of $H$ is strictly positive, i.e., $k^2 D+V_{\pm}>0$ for every $k\in\bbR$ if and only if $V_{\pm}>0$. Finally, Hypothesis (H3) is important in applying the theory of exponential dichotomies to the first-order system of equations \eqref{ShEqp} below, which is equivalent to the eigenvalue problem \eqref{ShEq}.
In particular, this hypothesis ensures that the existence of an exponential dichotomy of the constant coefficient asymptotic system implies the existence of an exponential dichotomy of \eqref{ShEqp} and the continuity of the respective dichotomy projections, cf.\ Lemma \ref{lem:dich}.
\begin{remark}
The results from the first four sections can be extended to the case $Hu = -(D(x)u')' + V(x)u$, where $D\in C(\bbR,\bbR^{n\times n})$ takes values in the set of symmetric matrices with real entries, there is $\alpha>0$ such that $D(x)\geq\alpha I>0$ for all $x\in\bbR$, and the limits $\lim_{x\to\pm\infty}D(x)=D_\pm$ exist (these restrictions on $D$ are described in \cite{BL99}).
\end{remark}
\begin{remark}
The results can also be extended to the case where the essential spectrum touches the imaginary axis, meaning that one or both of $V_\pm$ is not hyperbolic. In that case, we could instead consider the operator $H+C$ for some small positive constant $C$ so that $V_\pm + C$ is positive. This would allow us to use the below results to compute the Morse index of $H+C$, which equals the Morse index of $H$ for sufficiently small $C$. The case where one or both of $V_\pm < 0$ is not of interest, because in this case the state is already unstable due to the essential spectrum.
\end{remark}

Operators of the form \eqref{ShEq} arise, for example, when determining the stability of pulse or front solutions to a reaction--diffusion system with gradient nonlinearity:
\begin{align*}
u_t =D \Delta u+G(u), \quad u\in\R^n,
\end{align*}
where $G(u) = \nabla F(u)$ for some scalar function $F$; see the example in Section~\ref{ss:example}. If $\varphi_*$ is the solution about which one linearizes, then the spectrum of $\mathcal{L} =D \Delta + \nabla^2F(\varphi_*(x))$ determines the linear stability of $\varphi_*$. By considering instead the operator $H = -\mathcal{L}$, we have that the Morse index of $H$, which is defined to be the number of negative eigenvalues, equals the number of positive, and hence unstable, eigenvalues for $\mathcal L$. 

Our strategy is to first consider the eigenvalue problem on the half line
\begin{equation}\label{ShEqL}
	H_Lu := -Du'' + V(x)u = \lambda u, \quad u\in\bbR^n, \quad x\in(-\infty,L]
\end{equation}
where 
\begin{equation}\lb{defLD}
	\dom(H_L)=\Big\{ u\in H^2((-\infty,L];\R^n) \, \Big| \, u(L)=0\Big\}
\end{equation}
and $L \in \bbR$ is fixed. 
We then extend our results to the full line in Section~\ref{ss:line}. 

Setting
\begin{equation}\label{Defpq}\begin{split}
p_1 &:= u \in \R^{n}\\
p_2 & := Du' \in \R^{n}
\end{split}\end{equation}
and ${\bf p} := (p_1,p_2)^\top \in \R^{2n}$, we can write \eqref{ShEqL} as
\begin{equation} \label{ShEqp}
\bp' = A(x,\lambda) \bp, \quad A(x,\lambda)= \begin{pmatrix} 0_{n} & D^{-1} \\ -\lambda I_{n} +  V(x) & 0_{n} \end{pmatrix}
\end{equation}
where $I_n$ and $0_n$ denote the $n\times n$ identity and zero matrices, respectively. Then the Dirichlet boundary condition at $x=L$ corresponds to $\bp(L)\in\mathcal D$, where $\mathcal D$ is the {\em Dirichlet subspace} defined by
\begin{equation}\label{DefX1}
\mathcal D = \{ (p_1,p_2)^\top\in \R^{2n}\,\big|\, p_1=0 \}.
\end{equation}

We then define a trace map $\Phi_s$ that maps a solution $\bp$ of the system \eqref{ShEqp} to its boundary value $\bp(s)\in\R^{2n}$. Let $Y_\lambda$ denote the set of solutions of \eqref{ShEqp} that decay at $-\infty$. This leads to the critical observation (see Proposition \ref{cor:mult} below) that $\lambda$ is an eigenvalue of \eqref{ShEqL} on $(-\infty,s]$ if and only if the subspace $\Phi_s(Y_\lambda)$ intersects $\mathcal D$ nontrivially.

In what follows, we will be working with infinite rectangles $[-\lambda_\infty,0] \times [-\infty,L]$, where $\lambda_\infty$, $L\in\R_+$ are fixed, and we have compactified $(-\infty,L]$ as follows. We first identify $[-1,1]$ and $[-\infty,+\infty]$ via the mutually inverse functions
\begin{align}\label{DFNSS}
s(\sigma) = \frac12\ln\left(\frac{1+\sigma}{1-\sigma}\right),\quad \sigma(s) = \tanh(s), \quad \sigma\in[-1,1],\; s\in[-\infty,+\infty],
\end{align}
with $s(\pm1) = \pm\infty$ and $\sigma(\pm\infty) = \pm1$. Then there is a unique topology on $[-\infty,+\infty]$ so that both $s$ and $\sigma$ are continuous. Thus, by construction, the set $[-\infty,L]$ is homeomorphic to $[-1,\sigma(L)]$.
%$\{s(\sigma):\sigma\in[-1,\ell]\}$, with $\ell=\sigma(L)\in(0,1)$.

%With a slight abuse of notation, we will denote both the boundary of the infinite rectangle $[-\infty,L]\times[-\lambda_\infty,0]$ and its compactification $[-1,\ell]\times[-\lambda_\infty,0]$ by $\Gamma_L=\Gamma_{1,L}\cup\Gamma_{2,L}\cup\Gamma_{3,L}\cup\Gamma_{4,L}$, where the notation $\Gamma_{1,L}$ is used for the line $\sigma=-1$, $\lambda\in[-\lambda_\infty,0]$, the notation $\Gamma_{2,L}$ is used for the line $\lambda=0$, $\sigma\in[-1,\ell]$, the notation $\Gamma_{3,L}$ for the line $\sigma=\ell$, $\lambda = [-\lambda_\infty,0]$, the notation $\Gamma_{4,L}$ for the line $\lambda=-\lambda_\infty$, $\sigma\in[-1,\ell]$, and where each curve is oriented as shown in Figure~\ref{F1}. With abuse of notation we also let $\Gamma_{j,L}$, $j=1,2,3,4$ denote the images of these lines under the map $s(\cdot)$ from \eqref{DFNSS}.

We will denote the boundary of $[-\lambda_\infty,0]\times[-\infty,L]$ by $\Gamma_L=\Gamma_{1,L}\cup\Gamma_{2,L}\cup\Gamma_{3,L}\cup\Gamma_{4,L}$, where $\Gamma_{1,L} = [-\lambda_\infty,0] \times \{-\infty\}$,  $\Gamma_{2,L} = \{0\} \times [-\infty,L]$, $\Gamma_{3,L} = [-\lambda_\infty,0] \times \{L\}$ and $\Gamma_{4,L}= \{-\lambda_\infty\} \times [-\infty,L]$, and each curve is oriented as shown in Figure~\ref{F1}. With a slight abuse of notation we also let $\Gamma_{j,L}$ denote the images of these line segments under the map $\sigma(\cdot)$ from \eqref{DFNSS}.

We extend the definition of $\Phi_s(Y_\lambda)$ from $s\in(-\infty,L]$ to $s\in[-\infty,L]$ by introducing the planes
$$
	\Phi_{-\infty}(Y_\lambda):=\mathbb E_-^u(-\infty,\lambda),
$$
where $\mathbb E_-^u(-\infty,\lambda)$ denotes the spectral subspace corresponding to the eigenvalues of the matrix
\begin{align*}
A_-(\lambda) := \begin{pmatrix}
0_n & D^{-1}\\-\lambda I_n+V_-&0_n
\end{pmatrix} = \lim_{x\to-\infty} A(x,\lambda),
\end{align*}
with positive real parts.
%and where $A(x,\lambda)$ was defined in \eqref{ShEqp}.

There is a natural symplectic structure on $\R^{2n}$ such that the subspaces $\Phi_s(Y_\lambda)$ and $\mathcal D$ are Lagrangian (see Theorem \ref{th:lagrange}). This symplectic structure allows us to define the Maslov index, Definition~\ref{def:maslov}, of $\Phi_s(Y_\lambda)$ with respect to $\mathcal D$, along the paths $\Gamma_{j,L}$ in the $\lambda$--$s$ plane.

%\changed{Roughly speaking, we define a {\em crossing} of the path of a plane $P(\lambda,s)$ with $\Gamma_{i,L}$ to be values of $(\lambda,s)\in\Gamma_{i,L}$ such that $P(\lambda,s)$ has a non-trivial intersection with $\mathcal D$. Then, loosely, the {\em Maslov index} of $P$ along $\Gamma_{i,L}$ can be viewed as the number of signed crossings of $P$ with $\Gamma_{i,L}$, counted with multiplicities.}
We define a \emph{crossing} to be a point $(\lambda_*,s_*)$ for which $\Phi_s(Y_\lambda)$ has a non-trivial intersection with $\mathcal D$. The Maslov index of $\Gamma_{j,L}$ (with respect to $\mathcal D$) can be viewed as the number of crossings along $\Gamma_{j,L}$, counted with sign and multiplicity.
We will prove in Lemma~\ref{lem:dich} that the map
$$
	(\lambda,\sigma)\mapsto\Phi_{s(\sigma)}(Y_\lambda)
$$
from $[-\lambda_\infty,0] \times [-1,\ell]$ into the set of $n$-dimensional subspaces of $\R^{2n}$ is continuous, so its Maslov index is well defined. Our goal is to relate this Maslov index to the Morse index of \eqref{ShEqL}. Our strategy, as depicted in Figure~\ref{F1}, is as follows.

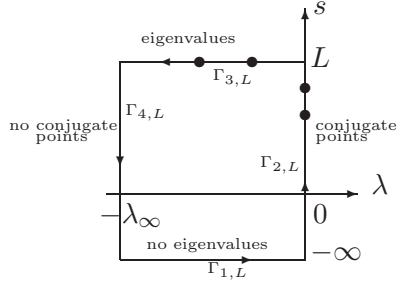
\begin{figure}
\centering
\begin{picture}(100,100)(-20,0)
\put(2,19){$-\lambda_{\infty}$}
\put(10,8){\line(0,1){4}}
\put(80,5){\vector(0,1){95}}
\put(10,5){\line(0,1){75}}
\put(10,80){\vector(0,-1){40}}
\put(5,30){\vector(1,0){95}}
\put(63,40){\text{\tiny $\Gamma_{2,L}$}}
\put(12,60){\text{\tiny $\Gamma_{4,L}$}}
\put(-32,54){\text{\tiny no conjugate}}
\put(-22,50){\text{\tiny points}}
\put(84,54){\text{\tiny conjugate}}
\put(84,50){\text{\tiny points}}
\put(45,73){\text{\tiny $\Gamma_{3,L}$}}
\put(43,0){\text{\tiny $\Gamma_{1,L}$}}
\put(105,30){$\lambda$}
\put(83,98){$s$}
\put(80,5){\vector(0,1){30}}
%\put(80,8){\line(0,1){4}}
\put(83,19){$0$}
\put(10,5){\line(1,0){70}}
\put(10,5){\vector(1,0){50}}
\put(10,80){\line(1,0){70}}
\put(80,80){\vector(-1,0){55}}
\put(82,78){$L$}
\put(82,5){$-\infty $}
%\put(8,20){\line(1,0){4}}
%\put(-2,38){$s_1$}
%\put(8,38){\line(1,0){4}}
%\put(-2,28){$s_4$}
%\put(8,28){\line(1,0){4}}
%\put(70,20){\circle*{4}}
\put(80,60){\circle*{4}}
\put(80,70){\circle*{4}}
%\put(20,80){\circle*{4}}
\put(40,80){\circle*{4}}
\put(60,80){\circle*{4}}
\put(18,87){{\tiny \text{eigenvalues}}}
\put(20,9){{\tiny \text{no eigenvalues}}}
%\put(-10,20){\rotatebox{90}{{\tiny conjugate points}}}
%\put(90,20){\rotatebox{90}{{\tiny no conjugate points}}}
\end{picture}
\caption{Illustrating the proof of Theorem \ref{th:mas2}: When $\lambda_\infty$ is large enough, there are no crossings on $\Gamma_{1,L}$ and $\Gamma_{4,L}$, and the Morse index equals the number of crossings on $\Gamma_{3,L}$. By homotopy invariance, the Morse index is equal to the number of crossings on $\Gamma_{2,L}$; these are precisely the conjugate points in $(-\infty,L)$.}\label{F1}
\end{figure} 

As above, we let $\Gamma_L=\Gamma_{1,L}\cup\Gamma_{2,L}\cup\Gamma_{3,L}\cup\Gamma_{4,L}$, where $\Gamma_L$ depends on the choices of both $L$ and $\lambda_\infty$. A homotopy argument implies that the Maslov index $\mi(\Gamma_L, \mathcal D)$ of the closed curve $\Gamma_L$ is equal to zero. By general properties of the Maslov index one has $\mi(\Gamma_L, \mathcal D)=\sum_{j=1}^4\mi(\Gamma_{j,L}, \mathcal D)$, using the orientation shown in Figure~\ref{F1}.

In Lemma \ref{lem:llss} we show that there are no crossings along $\Gamma_{1,L}$ or $\Gamma_{4,L}$ provided $\lambda_\infty$ is chosen large enough. This follows from the structure of $H_L$ and the fact that its spectrum is bounded from below uniformly in $L$.

Crossings along $\Gamma_{3,L}$ (when $s=L$ and $\lambda\in[-\lambda_\infty,0]$) correspond to eigenvalues of the operator $H_L$ defined in \eqref{ShEqL}, hence $\mo(H_L)$ equals the number of crossings on $\Gamma_{3,L}$, counting multiplicities. On the other hand, a local computation shows that all crossings along $\Gamma_{3,L}$ have the same sign; see Lemma \ref{lem:cross}. This important monotonicity property implies that $|\mi(\Gamma_{3,L},\cD))|$ is equal to the total number of crossings, and hence is equal to the Morse index of $H_L$.

Finally, $\mi(\Gamma_{2,L}, \mathcal D)$ is the Maslov index of the boundary value problem \eqref{ShEqL} with $\lambda=0$.  Another local computation shows that all crossings along $\Gamma_{2,L}$ have the same sign; see Lemma \ref{lem:scross}. This means, in particular, that $\mi(\Gamma_{2,L},\mathcal D)$ is equal to the number of crossings along $\Gamma_{2,L}$, counted with multiplicity.

Combining the above results with the fact that $\mi(\Gamma_L, \mathcal D)$ vanishes, we arrive at the desired formula
%\begin{equation}\label{MoMiform}
%	\mo(H_L)=\mi(\Gamma_{2,L}, \mathcal D)=\text{\# crossings with multiplicity up to } s=L,
%\end{equation}
\begin{equation}\label{MoMiform}
	\mo(H_L) = \text{\# crossings, with multiplicity, in } (-\infty,L)
\end{equation}
relating the Maslov index of the boundary value problem and the Morse index of the corresponding differential operator. This result is summarized in Theorem \ref{th:mas2}. We then show in Theorem~\ref{main-line-thm} that the Maslov index of $\Gamma_{2,L}$ is independent of $L$ for all $L$ large enough, and thus can be viewed as the Maslov index for \eqref{ShEq} posed on the whole line. 

The paper is organized as follows. After a brief introduction to the Maslov index, in Section \ref{secsa} we introduce an appropriate symplectic structure and relate the crossings of the path $\Phi_s(Y_\lambda)$ to the eigenvalues of differential operators. In Section \ref{mmi} we prove monotonicity of the crossings and provide the ingredients for the main results of the paper. In Section~\ref{ss:line} we extend our results from Section~\ref{mmi} to the operator \eqref{ShEq} on the whole line. Finally, in Section~\ref{ss:example} we apply our results to prove the instability of pulse solutions in reaction--diffusion systems with gradient nonlinearity. 

%%%%%%%%%%%%%%%%%%%%%%%%%%%%%%%%%%%%%%%%%%%%%%%%%%%%%%%%%%%

\section{The Maslov index: A symplectic approach to counting eigenvalues}\label{secsa}
We begin by recalling some notions regarding symplectic structures and the Maslov index; for a detailed exposition see \cite{arnold67,dG, rs93}. For a brief but extremely informative account see \cite{FJN}.  Many of these ideas have been extended to infinite-dimensional settings; see, for instance, \cite{F}. 

A skew-symmetric, non-degenerate bilinear form $\omega$ on $\R^{2n}$ is said to be symplectic.
%Symplectic forms are in one-to-one correspondence with skew-symmetric matrices $\Omega$, such that for every fixed $\omega$ 
For every such $\omega$,
there exists a unique skew-symmetric matrix $\Omega$ so that  $$\omega(v_1,v_2)=\langle  v_1, \Omega v_2\rangle,\quad v_1,\, v_2\in\R^{2n}.$$ A Lagrangian plane $V$ is an $n$-dimensional subspace in $\R^{2n}$ such that $\omega(v_1,v_2)=0$ for all $v_1,v_2\in V$. The set of all Lagrangian planes in $\R^{2n}$ is denoted by $\Lambda(n)$. Note that $\Lambda(n)$ depends on the choice of $\omega$.

Let $\cT(V)$ denote the train of a fixed Lagrangian plane $V\in\Lambda(n)$, that is, the set of all Lagrangian planes whose intersection with $V$ is nontrivial. Obviously $\cT(V)=\cup_{k=1}^n\cT_k(V)$, where $\cT_k(V)=\big\{W\in\Lambda(n)\,\big|\, \dim(V\cap W)=k\big\}$, the set of all Lagrangian planes whose intersection with $V$ is $k$ dimensional. Each set $\cT_k(V)$ is a submanifold of $\Lambda(n)$ of codimension $k(k+1)/2$; in particular, $\codim\cT_1(V)=1$. Moreover, $\cT_1(V)$ is two-sidedly embedded in $\Lambda(n)$, in the sense that there is a nowhere-vanishing normal vector field on $\cT_1(V)$. This vector field defines a canonical orientation, hence one can speak about the positive and negative sides of $\cT_1(V)$. A continuous curve $\Phi$ in $\Lambda(n)$ can be perturbed so that it only intersects the train in the $\cT_1(V)$ component, and does so transversely. Then $\mi(\Phi,V)$, the {\em Maslov index of $\Phi$ with respect to $V$}, is defined to be the signed number of intersections of the curve $\Phi$ with $\cT_1(V)$. This idea is made precise in \cite{arnold67}.

%\begin{remark}[The generic case] \label{rem:generic}
%A crossing $t_0$ is called {\em simple} if it is regular and $\Phi(t_0) \in \cT_1(V)$. A curve has only simple crossings if and only if  it is transverse to every $\cT_k(V)$.  Suppose that a curve $\Phi:[a,b] \to \Lambda(n)$ with $\Phi(a), \Phi(b) \in \cT_0(V) :=\big\{V_0\in\Lambda(n)\,\big|\, \dim(V\cap V_0)=0\big\}$ has only simple crossings. Then the two-sidedness of $\cT_1(V)$ allows one to define $m_+$ to be the number of crossings by which $\Phi(t)$ passes from the negative side of $\cT_1(V)$ to the positive side, and $m_-$ to be the number of crossings from negative to positive. We then have that $\mi(\Phi,V) = m_+ - m_-$.
% \end{remark}

For our purposes it suffices to define the Maslov index for regular, piecewise $C^1$ paths, following \cite{rs93}. Let $\Phi\colon [a,b]\to\Lambda(n)$ be a continuously differentiable path and fix a particular Lagrangian plane $V$. A {\em crossing} is a point $t_0\in[a,b]$ with $\Phi(t_0) \in \cT(V)$, i.e. $\Phi(t_0) \cap V \neq \{0\}$.  Let $t_0\in[a,b]$ be a crossing, and let $W$ be a subspace in $\R^{2n}$ transversal to $\Phi(t_0)$; generically $W$ can be chosen to be $V^\bot$. Then $W$ is transversal to $\Phi(t)$ for all $t\in[t_0-\varepsilon,t_0+\varepsilon]$ for $\varepsilon>0$ small enough.
Thus, there exists a family of matrices $\phi(t)$, viewed as operators from $\Phi(t_0)$ into $W$, so that $\Phi(t)$ is the graph of $\phi(t)$ for $|t-t_0|\le\varepsilon$. The bilinear form $Q_\cM$ defined by
\begin{equation}\label{QMF}
Q_\cM({\rm v}, {\rm w})=\frac{d}{dt}\omega({\rm v}, \phi(t){\rm w})\Big|_{t=t_0} \,\text{ for }\, {\rm v}, {\rm w}\in \Phi(t_0)\cap V,
\end{equation} is called the {\em crossing form}. We denote by $n_+(Q_\cM)$ and $n_-(Q_\cM)$ the number of positive and negative eigenvalues of $Q_\cM$, counted with multiplicity, so that $\sgn Q_\cM = n_+(Q_\cM) - n_-(Q_\cM)$ is its signature. Since $Q_\cM$ is a symmetric matrix its eigenvalues are purely real; thus the notion of positive and negative eigenvalues is well defined. A crossing is called {\em regular} if the crossing form is nondegenerate. 

\begin{definition}\lb{def:maslov}
If $s_0$ is the only regular crossing of the path $\Phi$ with $\cT(V)$ in a segment $[a_0,b_0]$, then the {\em Maslov index} $\mi(\Phi\big|_{[a_0,b_0]}, V)$ is defined as
\begin{equation}\label{DefMI}\begin{split}
\mi(\Phi\big|_{[a_0,b_0]}, V) &:= 
\Bigg\{\begin{array}{ll}
-n_-(Q_\cM)&\text{if } s_0=a_0\\
n_+(Q_\cM)-n_-(Q_\cM)&\text{if } s_0\in(a_0,b_0)\\
n_+(Q_\cM)&\text{if } s_0=b_0
\end{array}
\end{split}
\end{equation}
The {\em Maslov index} of any regular, piecewise $C^1$ path can be determined by computing the Maslov index on each segment and summing.
\end{definition}

The important features of the Maslov index for this work are summarized below.
\begin{theorem} \cite{rs93}
\begin{enumerate}
%\item \label{it:nat} {\em (Naturality)} If $T$ is a symplectic linear transformation then
%$$ \mi(T \Phi(t), T V) = \mi(\Phi, V). $$
\item \label{it:cat} {\em (Additivity)} For $a < c < b$,
$$\mi(\Phi\big|_{[a,b]}, V)  = \mi(\Phi\big|_{[a,c]},V) +  \mi(\Phi\big|_{[c,b]},V).$$
\item \label{it:hom} {\em (Homotopy invariance)} Two paths $\Phi_0, \Phi_1 \colon [a,b] \to \Lambda(n)$, with $\Phi_0(a) = \Phi_1(a)$ and $\Phi_0(b) = \Phi_1(b)$, are homotopic with fixed endpoints if and only if they have the same Maslov index.
\end{enumerate}
\end{theorem}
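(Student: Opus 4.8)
The plan is to derive both statements from the crossing–form description in Definition~\ref{def:maslov}, after reducing to the generic situation in which every crossing is regular. The reduction is standard: a piecewise $C^1$ path $\Phi$ may be $C^1$-perturbed with its endpoints held fixed so that it has only finitely many crossings with $\cT(V)$, all of them regular, and any two such perturbations are joined through regular paths along which the signed crossing count is constant (a regular crossing persists with its sign under a small perturbation, and no crossing can be created or destroyed in the interior of the parameter interval without passing through a degenerate one). This makes $\mi(\Phi,V)$ well defined and reduces both parts of the theorem to assertions about paths with only regular crossings.

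For additivity, write the index of a regular path $\Phi|_{[a,b]}$ as $\sum_{t_0} c(t_0)$ over its crossings, where $c(t_0)=\sgn Q_\cM$ for $t_0\in(a,b)$, $c(a)=n_+(Q_\cM)$, and $c(b)=-n_-(Q_\cM)$, with $Q_\cM$ the crossing form at $t_0$. If $c$ is not a crossing, the crossings of $\Phi|_{[a,b]}$ are the disjoint union of those of $\Phi|_{[a,c]}$ and $\Phi|_{[c,b]}$, with the same local contributions, so the indices add. If $c$ is itself a crossing, the crossing form at $c$ is the same whether $c$ is regarded as an interior point of $[a,b]$, the right endpoint of $[a,c]$, or the left endpoint of $[c,b]$ (it depends only on the germ of the path at $c$); since $n_+(Q_\cM)+\big(-n_-(Q_\cM)\big)=\sgn Q_\cM$, the two endpoint contributions on the right add up precisely to the interior contribution on the left, while all other crossings split as before. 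This yields part~\ref{it:cat}.

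For homotopy invariance, the ``only if'' direction is the homotopy invariance of the relative intersection number with the cooriented hypersurface $\cT_1(V)$. Given a homotopy $(\tau,t)\mapsto\Phi_\tau(t)$ with $\Phi_\tau(a)$ and $\Phi_\tau(b)$ independent of $\tau$, I would perturb the two-parameter family, keeping the two endpoint curves fixed, to make it transverse to the stratification $\{\cT_k(V)\}_{k\ge 1}$; since $\codim\cT_1(V)=1$ while $\codim\cT_k(V)\ge 3$ for $k\ge 2$, the perturbed family meets only $\cT_1(V)$, in a compact $1$-manifold whose boundary lies over $\tau\in\{0,1\}$, and counting its boundary points with the orientation induced by the coorientation of $\cT_1(V)$ shows that the signed crossing counts of $\Phi_0$ and $\Phi_1$ coincide. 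For the ``if'' direction I would invoke $\pi_1(\Lambda(n))\cong\bbZ$, with the isomorphism realized by the Maslov index of loops: if $\Phi_0$ and $\Phi_1$ have the same endpoints and the same Maslov index, then $\Phi_0$ followed by the reverse of $\Phi_1$ is a loop of Maslov index $0$, hence contractible, and unwinding the concatenation produces the desired homotopy from $\Phi_0$ to $\Phi_1$ with endpoints fixed.

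The step I expect to be the main obstacle is the homotopy invariance in the case that an endpoint lies on $\cT(V)$: one is then claiming stability of the ``half'' contributions $n_+(Q_\cM)$ or $-n_-(Q_\cM)$ under homotopies that fix the endpoint plane but may alter the side from which the path approaches the train. Handling this rigorously requires the two-sided embedding of $\cT_1(V)$ in $\Lambda(n)$ together with a careful local normal form for the path near an endpoint crossing, so that approach from one side is distinguished from approach from the other in a manner that is itself stable under homotopy; this is also the point at which the precise sign conventions in Definition~\ref{def:maslov} are forced.
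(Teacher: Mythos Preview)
The paper does not prove this theorem: it is stated with the citation \cite{rs93} and no proof is given, the result being quoted from Robbin--Salamon as background for the Maslov index machinery used later. So there is nothing in the paper to compare your proposal against.

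That said, your outline is the standard one and is essentially correct. One small slip: you wrote $c(a)=n_+(Q_\cM)$ and $c(b)=-n_-(Q_\cM)$, but the paper's Definition~\ref{def:maslov} assigns $-n_-(Q_\cM)$ at the \emph{left} endpoint $a_0$ and $n_+(Q_\cM)$ at the \emph{right} endpoint $b_0$, i.e.\ the opposite of what you wrote. Fortunately this does not break your additivity argument, since either convention gives $n_+(Q_\cM)+\big(-n_-(Q_\cM)\big)=\sgn Q_\cM$ when the two half-contributions at the splitting point $c$ are summed; but you should correct the labels to match the paper. Your identification of the endpoint-crossing case as the delicate point in homotopy invariance is apt, and indeed Remark~\ref{rem:endpoints} in the paper flags exactly this issue of endpoint conventions.
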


\begin{remark}\label{rem:endpoints}
When defining the Maslov index for a curve with $\Phi(a) \neq \Phi(b)$, one needs to be careful about crossings at the endpoints, to ensure that the additivity property holds. That is, if $c \in (a,b)$ is a crossing, should it contribute towards $\mi(\Phi|_{[a,c]},V)$ or $\mi(\Phi|_{[c,b]},V)$? Our (arbitrary) convention is that the positive part of the crossing is assigned to $[a,c]$, whereas the negative part is assigned to $[c,b]$. We could just as well have chosen the opposite sign convention, or could have assigned $\frac12 \sgn Q_\cM$ to each segment. For a closed curve these are all equivalent, so our choice ultimately does not matter, but it will affect some intermediate results; see in particular Theorem \ref{th:mas2} and Remark \ref{rem:sign}.
\end{remark}

In our application to the spectral theory of differential operators, a special role is played by paths for which all crossings are sign definite (and have the same sign).

\begin{remark} \label{rem:multiplicity}
A crossing at $t_0$ is said to be {\em positive} if the crossing form is positive definite; in this case the local contribution to the Maslov index is just $n_+(Q_\cM) = \dim (\Phi(t_0) \cap V)$. A curve $\Phi \colon [a,b] \to \Lambda(n)$ is said to be positive if all of its crossings are positive, in which case
\[
	\mi(\Phi,V) = \sum_{a<t\leq b} \dim(\Phi(t) \cap V).
\]
Similarly, the Maslov index of a negative curve is given by
\[
	\mi(\Phi,V) = \sum_{a\leq t < b} \dim(\Phi(t) \cap V).
\]
Both sums are necessarily finite because regular crossings are isolated.
\end{remark}

%\begin{remark} \label{rem:multiplicity}
%At a regular crossing $t_0$ the Maslov index of the path $\Phi : [t_0-\varepsilon, t_0 + \varepsilon ] \to \Lambda(n) $, for small enough $\varepsilon$, is equal to the signature of the crossing form at the crossing. In particular, the crossing is called {\em positive} (resp. {\em negative}) if the crossing form is positive (resp. negative) definite. In this case the local Maslov index at the crossing is equal to plus (resp. minus) the dimension of the subspace $\Phi(t_0) \cap V$. In other words, the multiplicity of the crossing is the dimension of the intersection.
%\end{remark}

 \subsection{Symplectic formulation of the eigenvalue problem}
We now return to the system \eqref{ShEqp}. We recall that $Y_\lambda$ denotes the ($n$-dimensional) space of solutions to the equation \eqref{ShEqp} that decay at $-\infty$, $\Phi_s$ denotes the trace map
\[
	\Phi_s\colon  \bp(\cdot) \mapsto \bp(s) \in \R^{2n}
\]
which maps a solution in $Y_\lambda$ to its value at $x=s$,
\[
	\mathcal D = \{ (p_1,p_2)^\top\in \R^{2n}\,\big|\, p_1=0 \}
\]
encodes the Dirichlet boundary conditions, and we have compactified $(-\infty,L]$ using the map $s(\sigma)$ from \eqref{DFNSS}.
%\begin{align}\label{DFNSS2}
%s(\sigma) = \frac12\ln\left(\frac{1+\sigma}{1-\sigma}\right),\quad \sigma(s) = \tanh(s), \quad \sigma\in[-1,1],\; s\in[-\infty,+\infty].
%\end{align}

The following lemma is needed to define (and compute) the Maslov index.

\begin{lemma}\label{lem:dich}
The map
$$
	(\lambda,\sigma)\mapsto\Phi_{s(\sigma)}(Y_\lambda)
$$
into the set of $n$-dimensional subspaces of $\R^{2n}$ is continuous on $[-\lambda_\infty,0] \times [-1,\ell]$ and $C^1$ on $[-\lambda_\infty,0] \times (-1,\ell]$; here $s(\sigma)$ is defined in \eqref{DFNSS} and $\lambda_\infty>0$ and $\ell\in(0,1)$ are fixed.
\end{lemma}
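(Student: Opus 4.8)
The plan is to reduce the continuity and smoothness of $(\lambda,\sigma)\mapsto\Phi_{s(\sigma)}(Y_\lambda)$ to standard exponential dichotomy theory for the first-order system \eqref{ShEqp}, together with a careful analysis of the behavior as $\sigma\to-1$ (i.e. $x\to-\infty$). First I would recall that, under Hypothesis \ref{h1}, the asymptotic matrix $A_-(\lambda)$ is hyperbolic for every $\lambda\in[-\lambda_\infty,0]$: its spectrum is $\{\pm\mu : \mu^2\in\Sp(D^{-1}(V_--\lambda))\}$, and since $V_-\geq\Sp(V_-)I>0$ and $\lambda\leq 0$, none of these eigenvalues lie on the imaginary axis; in fact they are bounded away from it uniformly in $\lambda$. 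Moreover the unstable subspace $\bbE^u_-(-\infty,\lambda)$ is $n$-dimensional and depends analytically on $\lambda$. By the Roughness Theorem for exponential dichotomies (see e.g. \cite{Cop,SS98}), applied on $(-\infty,L]$ using the integrability condition (H3), the system \eqref{ShEqp} has an exponential dichotomy on $(-\infty,L]$ whose unstable (i.e. decaying as $x\to-\infty$) subspace at each $x$ is precisely $\Phi_x(Y_\lambda)$, and the associated dichotomy projections depend continuously on $x\in(-\infty,L]$ and on $\lambda\in[-\lambda_\infty,0]$, jointly, and are $C^1$ in $x$ (and as smooth in $\lambda$ as $A(x,\lambda)$, which is affine in $\lambda$).

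The remaining point is continuity at the compactified endpoint $\sigma=-1$, where we have \emph{defined} $\Phi_{-\infty}(Y_\lambda):=\bbE^u_-(-\infty,\lambda)$. Here I would use the standard fact that the dichotomy subspace of the variable-coefficient system converges to the unstable subspace of the constant-coefficient limiting system: writing $A(x,\lambda)=A_-(\lambda)+R(x,\lambda)$ with $\|R(x,\lambda)\|$ integrable on $(-\infty,L]$ uniformly in $\lambda$ by (H3), a Gronwall/fixed-point argument on the stable manifold of the flow (or the explicit roughness estimates) gives $\dist\big(\Phi_x(Y_\lambda),\bbE^u_-(-\infty,\lambda)\big)\to 0$ as $x\to-\infty$, with a rate controlled by $\int_{-\infty}^x\|R(y,\lambda)\|\,dy$, hence uniformly in $\lambda$ on the compact set $[-\lambda_\infty,0]$. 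Combining this with the joint continuity on the interior and the homeomorphism $\sigma\mapsto s(\sigma)$ from \eqref{DFNSS} yields continuity of $(\lambda,\sigma)\mapsto\Phi_{s(\sigma)}(Y_\lambda)$ on all of $[-\lambda_\infty,0]\times[-1,\ell]$. The $C^1$ statement on $[-\lambda_\infty,0]\times(-1,\ell]$ is then immediate: for $\sigma>-1$ we have $s(\sigma)\in\bbR$ finite, the change of variables $s(\sigma)$ is smooth there, and on any compact $x$-interval $\Phi_x(Y_\lambda)$ is given by evolving a fixed frame of $\bbE^u_-(-\infty,\lambda)$ (pulled back from a large negative $x_0$) forward by the flow of \eqref{ShEqp}, which depends $C^1$ (indeed analytically) on $(x,\lambda)$.

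One technical care: to make the above rigorous I would fix a reference point $x_0\ll 0$, choose a continuous (in $\lambda$) basis of $\bbE^u_-(-\infty,\lambda)$, solve the integral equation for the perturbed unstable subspace to get a continuous-in-$(\lambda,x_0)$ frame for $\Phi_{x_0}(Y_\lambda)$, and then propagate forward; taking $x_0\to-\infty$ controls the endpoint. I expect the main obstacle to be precisely the uniformity in $\lambda$ of the convergence $\Phi_x(Y_\lambda)\to\bbE^u_-(-\infty,\lambda)$ as $x\to-\infty$ — this is where Hypothesis (H3) is essential, and it requires that the tail bound $\int_{-\infty}^x\|V(y)-V_-\|\,dy$ be used to dominate the perturbation series uniformly over the compact $\lambda$-range. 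Everything else is a routine application of the roughness/continuity theory for exponential dichotomies, which is exactly what the text anticipates in the sentence preceding the lemma.
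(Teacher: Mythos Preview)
Your proposal is correct and follows essentially the same route as the paper: both arguments obtain an exponential dichotomy for \eqref{ShEqp} on $(-\infty,L]$ (the paper via Palmer's theorem and the Fredholm property of $H-\lambda I$, you via the Roughness theorem applied directly to the hyperbolic limit $A_-(\lambda)$), identify $\Phi_s(Y_\lambda)=\bbE^u_-(s,\lambda)$, and then use the standard convergence of dichotomy projections to the asymptotic spectral projection as $x\to-\infty$ (the paper cites \cite{Cop,DK}) to extend continuity to $\sigma=-1$. Your write-up is in fact more explicit about the $C^1$ claim and the uniformity in $\lambda$ of the endpoint convergence than the paper's own proof, which leaves these points implicit.
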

In what follows we will suppress the notation $s(\sigma)$ and will view $(\lambda,s)\mapsto\Phi_s(Y_\lambda)$ as a continuous map from $[-\lambda_\infty,0] \times [-\infty,L]$ into the set of $n$-dimensional subspaces of $\R^{2n}$.
\begin{proof}
We will use exponential dichotomies. By Hypothesis~\ref{h1} (H2), the essential spectrum of $H$ is strictly positive. Thus, $H-\lambda I$ is Fredholm for all $\lambda\in[-\lambda_\infty,0]$ and so the first order differential operator $\frac{\mathrm d} {\mathrm d x} - A(\cdot, \lambda)$ is Fredholm in $L^2(\R;\R^{2n})$; see, e.g. \cite{SS08} and \cite[Thm. 4.1]{GLS}. By Palmer's Theorem \cite{Pal1,Pal2}, see also \cite{B-AG}, the differential equation \eqref{ShEqp} has exponential dichotomies on $\R_+$ and $\R_-$. We will denote by $\mathbb E_-^u(x,\lambda)$ the dichotomy subspace of \eqref{ShEqp} for $x\in(-\infty,L]$, consisting of the values at $x$ of solutions of \eqref{ShEqp} which decay exponentially to zero at $-\infty$. That $\mathbb E_-^u(x,\lambda)$ is continuous for $x\in(-\infty,L]$ is true by definition. 

We note that the asymptotic (constant coefficient) equation $\bp'(s) = A_-(\lambda)\bp(s)$ is also exponentially dichotomic, that is, $\Sp(A_-(\lambda))\cap\mathrm i\mathbb R = \varnothing$, and we denote by $\mathbb E_-^u(-\infty,\lambda)$ the ($x$-independent) unstable subspace associated with the asymptotic system. Then the projections in $\R^{2n}$ onto $\mathbb E_-^u(x,\lambda)$ converge to the projection onto $\mathbb E^u_-(-\infty,\lambda)$ as $x\to-\infty$, see e.g. \cite{Cop} or \cite{DK}. Now the relation $\Phi_s(Y_\lambda) = \mathbb E_-^u(s,\lambda)$ for $s\in(-\infty,L]$ finishes the proof.
\end{proof}

\begin{theorem}\label{th:lagrange}
For all $s \in [-\infty,+\infty)$ and $\lambda \in (-\infty,0]$ the plane $\Phi_s(Y_\lambda)$ belongs to the space $\Lambda(n)$ of Lagrangian $n$-planes in $\R^{2n}$, with the Lagrangian structure $\omega(v_1,v_2)=\langle v_1,\Omega v_2\rangle$, where
\begin{equation}\label{eq:Omega}
	\Omega = \begin{pmatrix} 0_n & -I_n \\ I_n & 0_n \end{pmatrix}.
\end{equation}
\end{theorem}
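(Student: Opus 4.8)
The plan is to exploit the fact that the coefficient matrix $A(x,\lambda)$ of the first-order system \eqref{ShEqp} is infinitesimally symplectic with respect to $\Omega$, so that $\omega$ is conserved along any pair of solutions; the Lagrangian property of $\Phi_s(Y_\lambda)$ is then forced by the decay of its elements at $-\infty$, together with the dimension count already contained in (the statement preceding) Lemma~\ref{lem:dich}.

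First I would record the identity
\[
	\Omega A(x,\lambda) = \begin{pmatrix} \lambda I_n - V(x) & 0_n \\ 0_n & D^{-1} \end{pmatrix},
\]
whose right-hand side is symmetric because $V(x)$ and $D$ are symmetric; equivalently, $A(x,\lambda)^\top\Omega + \Omega A(x,\lambda) = 0$. Differentiating $\omega(\bp(x),\bq(x)) = \langle \bp(x),\Omega\bq(x)\rangle$ along two solutions $\bp,\bq$ of \eqref{ShEqp} then gives
\[
	\frac{d}{dx}\,\omega(\bp(x),\bq(x)) = \langle A\bp,\Omega\bq\rangle + \langle \bp,\Omega A\bq\rangle = \langle \bp,(A^\top\Omega + \Omega A)\bq\rangle = 0,
\]
so $\omega(\bp(x),\bq(x))$ is constant in $x$. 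Writing $p_1=u$, $p_2=Du'$, this is exactly the classical Wronskian identity for the self-adjoint operator $-Du''+Vu$, and could equally well be verified in those terms.

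Now fix $\lambda\in(-\infty,0]$ and a finite $s$, and take $v_1,v_2\in\Phi_s(Y_\lambda)$, say $v_i=\bp_i(s)$ with $\bp_i\in Y_\lambda$. Since each $\bp_i$ decays to $0$ as $x\to-\infty$, the constant $\omega(\bp_1(x),\bp_2(x))$ equals $\lim_{x\to-\infty}\omega(\bp_1(x),\bp_2(x))=0$; hence $\Phi_s(Y_\lambda)$ is isotropic. Since $\Phi_s$ is injective on the solution space and $\dim Y_\lambda=n$ (equivalently, via the dichotomy used in Lemma~\ref{lem:dich}), we get $\dim\Phi_s(Y_\lambda)=n$, so $\Phi_s(Y_\lambda)\in\Lambda(n)$. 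For the remaining endpoint $s=-\infty$ one has $\Phi_{-\infty}(Y_\lambda)=\mathbb{E}_-^u(-\infty,\lambda)$, the unstable subspace of the constant matrix $A_-(\lambda)$, which is itself infinitesimally symplectic; the same computation applied to the decaying solutions $e^{xA_-(\lambda)}v_i$ yields $\omega(v_1,v_2)=0$, while $\dim\mathbb{E}_-^u(-\infty,\lambda)=n$. Alternatively, each $\mathbb{E}_-^u(x,\lambda)=\Phi_x(Y_\lambda)$ is Lagrangian for finite $x$, the associated projections converge as $x\to-\infty$ by Lemma~\ref{lem:dich}, and $\Lambda(n)$ is closed in the Grassmannian. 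All steps are routine; the only points that need a little care are the dimension count—already supplied—and the passage from the variable-coefficient system to its constant-coefficient limit at $s=-\infty$, so I do not anticipate a genuine obstacle.
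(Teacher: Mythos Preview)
Your proposal is correct and follows essentially the same approach as the paper: both use the symmetry $\Omega A(x,\lambda)=(\Omega A(x,\lambda))^\top$ to show $\omega$ is conserved along solution pairs, then invoke decay at $-\infty$ to obtain isotropy, and treat the endpoint $s=-\infty$ via the constant-coefficient flow $e^{xA_-(\lambda)}$. The only minor difference is that the paper carries out the dimension count $\dim\mathbb{E}_-^u(-\infty,\lambda)=n$ explicitly here (via the $\pm\nu$ eigenvalue pairing of $A_-(\lambda)$), whereas you defer it to the statement of Lemma~\ref{lem:dich}; your closure-of-$\Lambda(n)$ alternative for $s=-\infty$ is a pleasant extra but not needed.
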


\begin{proof}
We must prove that $\Phi_s(Y_\lambda)$ is $n$-dimensional, and $\left<v_1,\Omega v_2\right> = 0$ for any $v_1, v_2 \in \Phi_s(Y_\lambda)$. First consider $s\in(-\infty,L]$. For %any two vectors from $\Phi_s(Y_\lambda)$, 
$v_1=\bp(s)$ and $v_2=\bq(s)$ we compute
\begin{align*}
\langle v_1,\Omega v_2\rangle&=\langle \bp(s),\Omega \bq(s)\rangle\\
&=\int_{-\infty}^s\frac{d}{dx}\Big(\langle \bp(x),\Omega \bq(x)\rangle\Big)\,dx\\
&=\int_{-\infty}^s\Big(\langle \bp'(x),\Omega \bq(x)\rangle+\langle \bp(x),\Omega \bq'(x)\rangle\Big)\,dx\\
&=\int_{-\infty}^s\Big(\langle A(x,\lambda)\bp(x),\Omega \bq(x)\rangle+\langle \bp(x),\Omega A(x,\lambda)\bq(x)\rangle\Big)\,dx\\
&=\int_{-\infty}^s\Big(-\langle \bp(x), \Omega A(x,\lambda)\bq(x)\rangle+\langle \bp(x),\Omega A(x,\lambda)\bq(x)\rangle\Big)\,dx=0,\end{align*}
where in the last line we used  $\Omega^\top=-\Omega$ and
\begin{equation}\label{symm}
	\Omega A(x,\lambda)=\big(\Omega A(x,\lambda)\big)^\top.\quad
\end{equation}
For $s=-\infty$, the notation $\Phi_{-\infty}(Y_\lambda)$ refers to $\mathbb E_-^u(-\infty,\lambda)$, the unstable subspace at $-\infty$. Let $v_1$, $v_2\in\mathbb E_-^u(-\infty,\lambda)$ and consider the evolution of $v_1$ and $v_2$ under the vector field
\[
	\bp' = A_-(\lambda)\bp, \quad A_-(\lambda)=\lim_{x\to-\infty}A(x,\lambda).
\]
Then by \eqref{symm} $$\frac{\mathrm d}{\mathrm d x} \langle \mathrm e^{xA_-(\lambda)}v_1, \Omega \mathrm e^{xA_-(\lambda)}v_2\rangle = 0.$$ Thus $\langle v_1,  \Omega v_2\rangle = \langle \Omega \mathrm e^{xA_-(\lambda)}v_1,\mathrm e^{xA_-(\lambda)}v_2\rangle = 0$ since $\mathrm e^{xA_-(\lambda)}v_{1,2}\to 0$ as $x\to-\infty$ by definition of the unstable subspace.

To complete the proof, we show that $\dim \Phi_s(Y_\lambda) = n$. By continuity, it suffices to prove the result at $s=-\infty$. Using
\[
	A_-(\lambda) = \begin{pmatrix}
	0_n & D^{-1} \\
	-\lambda I_n + V_- &0_n
	\end{pmatrix}
\]
we see that $\pm\nu \in \Sp(A_-(\lambda))$ if and only if $(-\lambda I_n + V_-)p = \nu^2 Dp$ for some $p \in \bbR^n$. Since $V_-$ and $D$ are positive, and $\lambda \leq 0$, we must have $\nu^2>0$, so $\nu$ is nonzero. This means $A_-(\lambda)$ has an equal number of positive and negative eigenvalues, hence $\dim \Phi_{-\infty}(Y_\lambda) = \dim \mathbb E_-^u(-\infty,\lambda) = n$ as claimed.
%
%
%We know that $-\nabla^2 F(0) > 0$ and $D^{-1} > 0$. Also, $\nu, - \nu \in\Sp(A_\pm)$ if and only if $-\nabla^2 F(0)p_1 = \nu^2 Dp_1$. This implies $\langle -\nabla^2 F(0) p_1, p_1 \rangle = \nu^2 \langle D p_1, p_1 \rangle$, which in turn implies that $\nu^2 > 0$. Hence, $\nu \in \mathbb{R} \setminus \{0\}$ and we see that $\mbox{dim}(\mathbb E_-^u(x,0)) = n$ for every fixed $x<+\infty$ as claimed.
\end{proof}

It is also true that $\mathcal D\in\Lambda(n)$; this can be verified by a straightforward calculation.

\begin{definition}\label{def:cpev}
For a given $\lambda \in \bbR$, a point $s\in(-\infty,L]$ is called a {\em $\lambda$-conjugate point} of \eqref{ShEqp} if $\Phi_s(Y_\lambda) \cap \mathcal D \neq \{0\}$. In the special case $\lambda=0$, $s$ is simply called a \emph{conjugate point}.
\end{definition}

Thus $s$ is a $\lambda$-conjugate point if and only if there exists a nonzero solution ${\bf p} = (p_1,p_2)^\top$ to \eqref{ShEqp} that decays at $-\infty$ and satisfies the boundary condition $p_1(s) = 0$. This means \eqref{ShEqL} has a nonzero solution in $H^2((-\infty,s]; \bbR^n)$ with $u(s) =0$, hence $\lambda$ is an eigenvalue of $H_s$.

\begin{proposition}\label{cor:mult}
For any $\lambda\in\R$ and $s\in(-\infty,L]$ the following assertions are equivalent:
\begin{enumerate}
\item[$(i)$]\, $\lambda$ is an eigenvalue of $H_{s}$;

\item[$(ii)$]\, $s$ is a $\lambda$-conjugate point of \eqref{ShEqp}.
\end{enumerate}
Moreover, the multiplicity of the eigenvalue $\lambda$ is equal to the dimension of the subspace $\Phi_s(Y_\lambda)\cap \mathcal D$.
 \end{proposition}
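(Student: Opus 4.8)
The plan is to produce an explicit linear isomorphism
\[
	\Psi\colon \ker(H_s-\lambda)\;\longrightarrow\;\Phi_s(Y_\lambda)\cap\mathcal D,\qquad u\longmapsto \bp(s)=\big(u(s),Du'(s)\big)^\top,
\]
where $\bp=(p_1,p_2)^\top=(u,Du')^\top$ is obtained from $u$ by the substitution \eqref{Defpq}. Granting this, assertion $(i)$ holds iff the left-hand space is nontrivial, which happens iff the right-hand space is nontrivial, i.e.\ iff $(ii)$ holds; and the multiplicity claim is just the equality of dimensions. The map $\Psi$ is obviously linear, and it is injective because a solution of the linear system \eqref{ShEqp} is determined by its value at $s$: if $\Psi(u)=\bp(s)=0$ then $\bp\equiv 0$ on $(-\infty,s]$, hence $u\equiv0$. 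It therefore remains to check that $\Psi$ is well defined (image in $\Phi_s(Y_\lambda)\cap\mathcal D$) and surjective; these are exactly $(i)\Rightarrow(ii)$ and $(ii)\Rightarrow(i)$.

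For well-definedness and $(i)\Rightarrow(ii)$: let $u\in\ker(H_s-\lambda)$ be nonzero. Then $u\in H^2((-\infty,s];\R^n)$, so $u$ and $u'$, and hence $\bp$, tend to $0$ as $x\to-\infty$; differentiating \eqref{Defpq} and invoking \eqref{ShEqL} shows that $\bp$ solves \eqref{ShEqp}. Thus $\bp$ is a solution of \eqref{ShEqp} decaying at $-\infty$, i.e.\ $\bp\in Y_\lambda$, so $\bp(s)\in\Phi_s(Y_\lambda)$. Since $u(s)=0$ we get $p_1(s)=0$, so $\bp(s)\in\mathcal D$; and $\bp(s)\neq0$ since $u\neq0$. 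Hence $\Phi_s(Y_\lambda)\cap\mathcal D\neq\{0\}$, i.e.\ $s$ is a $\lambda$-conjugate point.

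For surjectivity and $(ii)\Rightarrow(i)$: let $0\neq v\in\Phi_s(Y_\lambda)\cap\mathcal D$, and let $\bp$ be the unique solution of \eqref{ShEqp} on $(-\infty,s]$ with $\bp(s)=v$. Since $\Phi_x(Y_\lambda)=\mathbb E_-^u(x,\lambda)$ for all $x\leq s$ and \eqref{ShEqp} has an exponential dichotomy on $\R_-$ (see the proof of Lemma \ref{lem:dich}), the invariant subspaces $\mathbb E_-^u(x,\lambda)$ contain $\bp(x)$, so $\bp$ decays exponentially as $x\to-\infty$. Setting $u:=p_1$ and unwinding \eqref{Defpq}--\eqref{ShEqp} (using that $D$ is constant) gives $u\in C^2$ with $-Du''+Vu=\lambda u$ on $(-\infty,s]$. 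Moreover $u=p_1$ and $u'=D^{-1}p_2$ decay exponentially, and $u''=D^{-1}\big(V(x)-\lambda\big)u$ also decays exponentially because $V$ is bounded on $(-\infty,s]$ by Hypothesis \ref{h1}; hence $u\in H^2((-\infty,s];\R^n)$. Finally $v\in\mathcal D$ forces $u(s)=p_1(s)=0$, so $u\in\dom(H_s)$, $H_su=\lambda u$, $u\neq0$, and $\Psi(u)=v$. This completes the proof of the equivalence and the multiplicity formula. The argument is essentially bookkeeping; the one point that needs attention is the identification of ``$u$ an $H^2$ eigenfunction'' with ``$\bp\in Y_\lambda=\mathbb E_-^u$''---in one direction this uses only that $H^2$ functions on a half-line vanish at the infinite end, while in the other it uses the exponential dichotomy from Lemma \ref{lem:dich} together with boundedness of $V$ to promote decay of $\bp$ to $H^2$-regularity of $u$. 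I do not foresee any genuine obstacle.
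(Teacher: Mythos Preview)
Your argument is correct and follows the same idea as the paper: the paper does not give a formal proof of this proposition, but simply observes in the paragraph preceding it that $s$ is a $\lambda$-conjugate point iff there is a nonzero solution $\bp=(p_1,p_2)^\top$ of \eqref{ShEqp} decaying at $-\infty$ with $p_1(s)=0$, which is precisely the condition that \eqref{ShEqL} has a nonzero $H^2$ solution with $u(s)=0$. Your explicit construction of the isomorphism $\Psi$ makes this correspondence rigorous, and in particular your use of the exponential dichotomy from Lemma~\ref{lem:dich} to promote membership in $Y_\lambda$ to exponential decay (and hence $H^2$-regularity of $u$) fills in exactly the detail the paper leaves implicit.
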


As we will see in Lemma \ref{lem:llss}, for $\lambda_\infty$ large enough $H_s$ has no eigenvalues with $\lambda\le-\lambda_\infty$. Therefore, we may restrict $\lambda$ to $[-\lambda_\infty,0]$. For a fixed $\lambda_\infty $ we can view $\Phi_s$ as a continuous map from the rectangle $[-\lambda_\infty,0] \times [-\infty,L]$ to the space of Lagrangian planes $\Lambda(n)$. Let $\Gamma_L$ denote the boundary of the image of this map, and define $\Gamma_{1,L}=\big\{\Phi_{-\infty}(Y_\lambda)\big|\, \lambda\in[- \lambda_\infty,0]\big\}$, $\Gamma_{2,L}=\big\{\Phi_{s}(Y_0)\big|\, s\in[-\infty,L]\big\}$, $\Gamma_{3,L}=\big\{\Phi_L(Y_\lambda)\big|\, \lambda\in[0,-\lambda_\infty]\big\}$, and $\Gamma_{4,L}=\big\{\Phi_{s}(Y_{-\lambda_\infty})\big|\, s\in[-\infty, L]\big\}$, so that $\Gamma_L = \cup_{i}\Gamma_{i,L}$, as in Figure \ref{F1}.
Since $\Phi_s$ is continuous, $\Gamma_L$ is homotopic to a point, hence $\mi(\Gamma_L; \mathcal D) = 0$.
Letting $A_{i,L}$ denote the Maslov index of $\Gamma_{i,L}$ with respect to the Dirichlet subspace $\mathcal D$, we have
\begin{equation}\label{htpy}
	A_{1,L} + A_{2,L} + A_{3,L} + A_{4,L} = 0. %A_{i,L} := \mi(\Gamma_{i,L},\mathcal D),
\end{equation}

%\begin{theorem} \label{th:mas1}The homotopy class of $\Gamma_L$ is zero in $\pi_1(\Lambda(n))$.
%\end{theorem}
%\begin{remark}
%We remark that the square in Theorem~\ref{th:mas1} is compact since, with abuse of notation, we actually consider the square under the transformation defined by \eqref{DFNSS}.
%\end{remark}
%It is well known that $\pi_1(\Lambda(n)) \approx \Z$, and that the class of a closed curve can be determined by the number of intersections of such a curve (up to homotopy) with the train of a fixed Lagrangian plane (see for example, \cite{arnold67}, or \cite{rs93} and the references therein). Since the signed number of intersections does not change under homotopy, and $\Gamma_L$ is homotopic to a point, we have the following result.

%\begin{corollary}\label{cor3.9} As we travel along $\Gamma_L$, the Maslov index $\mi\big(\Phi_s(Y_\lambda)\big|_{\Gamma_L},\mathcal D\big)$ is equal to zero.
%\end{corollary}

%\begin{corollary}\label{cor3.9} The Maslov index $\mi(\Gamma_L,\mathcal D)$ is equal to zero.
%\end{corollary}

\begin{remark}\label{remSigned}
The Maslov index counts signed crossings and so, in general, $|A_{i,L}|$ is a lower bound for the total number of crossings. However, Lemmas~\ref{lem:cross} and \ref{lem:scross} below imply that the crossings along any given $\Gamma_{i,L}$ are of the same sign, which means $|A_{i,L}|$ is in fact equal to the number of crossings. 
%Additionally, Lemma~\ref{lem:cross} shows that $A_{3,L}\ge0$ and Lemma~\ref{lem:scross} shows that $A_{2,L}\le0$.
\end{remark}

\subsection{No crossings on $\Gamma_{1,L}$ and $\Gamma_{4,L}$}
We now show that $A_{1,L}=0$ and, provided $\lambda_\infty$ is large enough, $A_{4,L}=0$. We use the following result.
\begin{theorem}\label{KatoTh}{\bf\cite[Theorem V.4.10]{Kato}}
Let $\cH$ be selfadjoint and $\cV\in\cB(\cX)$ be a symmetric operator on a Hilbert space $\cX$. Then $$\dist\big(\Sp(\cH+\cV),\Sp(\cH)\big)\le\|\cV\|_{\cB(\cX)}.$$
\end{theorem}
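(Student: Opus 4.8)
The plan is to establish the contrapositive pointwise: if $\mu\in\C$ satisfies $\dist(\mu,\Sp(\cH))>\|\cV\|_{\cB(\cX)}$, then $\mu$ does not belong to $\Sp(\cH+\cV)$. Here $\dist$ in the statement is read as the Hausdorff semidistance $\sup_{\mu\in\Sp(\cH+\cV)}\dist(\mu,\Sp(\cH))$; equivalently, the claim is that $\Sp(\cH+\cV)$ lies in the closed $\|\cV\|_{\cB(\cX)}$-neighborhood of $\Sp(\cH)$. Since $\cV\in\cB(\cX)$ is everywhere-defined and symmetric, $\cH+\cV$ is selfadjoint on $\dom(\cH+\cV)=\dom(\cH)$, so both spectra are closed subsets of $\R$ and the statement is meaningful.

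The one analytic ingredient I would invoke is the resolvent estimate for a selfadjoint operator: for selfadjoint $T$ on $\cX$ and $\mu\notin\Sp(T)$, $\|(T-\mu)^{-1}\|_{\cB(\cX)}=\dist(\mu,\Sp(T))^{-1}$ (the inequality $\le$ is all that is needed). This is immediate from the spectral theorem, since the bounded Borel functional calculus is isometric and the function $t\mapsto(t-\mu)^{-1}$ has supremum $\dist(\mu,\Sp(T))^{-1}$ over $\Sp(T)\subseteq\R$.

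Now fix $\mu$ with $\dist(\mu,\Sp(\cH))>\|\cV\|_{\cB(\cX)}$. Then $\mu\notin\Sp(\cH)$, the operator $R:=(\cH-\mu)^{-1}\in\cB(\cX)$ exists with $\|R\|_{\cB(\cX)}=\dist(\mu,\Sp(\cH))^{-1}$, and hence $\|R\cV\|_{\cB(\cX)}\le\|R\|_{\cB(\cX)}\|\cV\|_{\cB(\cX)}<1$. By the Neumann series, $S:=(I+R\cV)^{-1}\in\cB(\cX)$. A direct computation gives $(\cH+\cV-\mu)x=(\cH-\mu)(I+R\cV)x$ for every $x\in\dom(\cH)$; one then checks that $SR\in\cB(\cX)$ maps $\cX$ into $\dom(\cH)$ — using $\ran R=\dom(\cH)$ and the identity $S=I-R\cV S$, so that $SRy=Ry-R\cV SRy\in\dom(\cH)$ — and that $SR$ is a two-sided inverse of $\cH+\cV-\mu$, i.e. $\mu\notin\Sp(\cH+\cV)$. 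Taking the contrapositive and then the supremum over $\mu\in\Sp(\cH+\cV)$ yields $\dist\big(\Sp(\cH+\cV),\Sp(\cH)\big)\le\|\cV\|_{\cB(\cX)}$.

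There is no genuine obstacle here: this is the classical stability estimate for spectra of selfadjoint operators under bounded symmetric perturbations. The only item requiring a little care is the domain bookkeeping in the factorization $\cH+\cV-\mu=(\cH-\mu)(I+R\cV)$ and in verifying that the candidate inverse lands in $\dom(\cH)$, as indicated above. For the use made of this result in the present paper, one may moreover restrict to real $\mu$, since $\cH$ and $\cH+\cV$ are bounded below; in that range the bound $\|(\cH-\mu)^{-1}\|\le\dist(\mu,\Sp(\cH))^{-1}$ is elementary (no spectral theorem needed) and the argument is otherwise unchanged.
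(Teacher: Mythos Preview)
Your argument is correct: the resolvent bound $\|(\cH-\mu)^{-1}\|=\dist(\mu,\Sp(\cH))^{-1}$ together with a Neumann series gives the invertibility of $\cH+\cV-\mu$ whenever $\dist(\mu,\Sp(\cH))>\|\cV\|$, and your domain bookkeeping via $S=I-R\cV S$ is the right way to check that $SR$ maps into $\dom(\cH)$. This is precisely the classical proof.

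As for comparison: the paper does not supply its own proof of this statement. It is quoted as \cite[Theorem~V.4.10]{Kato} and used as a black box in the proof of Lemma~\ref{lem:llss} to bound $\Sp(H_s)$ in terms of $\Sp(H_{(0)s})$. So there is nothing to compare against; your write-up simply fills in what Kato's book provides. Your closing remark that for the application one only needs real $\mu$ below the spectrum (where the resolvent bound is elementary) is apt, since in Lemma~\ref{lem:llss} the result is applied only to rule out eigenvalues $\lambda<-\|V\|_\infty$.
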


\begin{lemma}\label{lem:llss}
The following hold for all $L\in\R$:
\begin{itemize}
\item[(i)] \label{llsshyp2} $A_{1,L}=0$;
\item[(ii)] \label{llsshyp1} If $\lambda_\infty>\|V\|_\infty$, then  $A_{4,L}=0$.
\end{itemize}
\end{lemma}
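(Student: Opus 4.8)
The plan is to show that for both $\Gamma_{1,L}$ and (under the stated condition on $\lambda_\infty$) $\Gamma_{4,L}$, there are simply no crossings at all, so the Maslov index along these paths is trivially zero. Recall that a crossing along $\Gamma_{1,L}$ is a point $(\lambda,-\infty)$ with $\Phi_{-\infty}(Y_\lambda)\cap\mathcal D\neq\{0\}$, and a crossing along $\Gamma_{4,L}$ is a point $(-\lambda_\infty,s)$ with $\Phi_s(Y_{-\lambda_\infty})\cap\mathcal D\neq\{0\}$. By Proposition \ref{cor:mult} (and its proof/discussion, together with Definition \ref{def:cpev}), a crossing along $\Gamma_{4,L}$ at a finite $s$ forces $-\lambda_\infty$ to be an eigenvalue of $H_s$; more generally a crossing of $\Phi_s(Y_\lambda)$ with $\mathcal D$ corresponds to an eigenvalue $\lambda$ of $H_s$ on $(-\infty,s]$ with Dirichlet condition at $s$. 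So the two parts reduce to uniform (in $s\le L$, or even in $s\in\R$) lower bounds on the spectrum of $H_s$.

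For part (ii), I would argue that $H_s\ge -\|V\|_\infty$ for every $s$. Indeed, on $\dom(H_s)$ one has $\langle H_s u,u\rangle = \langle D u',u'\rangle + \langle V u,u\rangle \ge \langle V u,u\rangle \ge -\|V\|_\infty\|u\|^2$, since $D>0$ makes the first term nonnegative and $V(x)$ is symmetric with operator norm bounded by $\|V\|_\infty$. (Alternatively, one can invoke Theorem \ref{KatoTh} with $\cH = -D\,d^2/dx^2$, which is nonnegative and selfadjoint with the stated domain and boundary condition, and $\cV$ = multiplication by $V$, a bounded symmetric operator with $\|\cV\|_{\cB(\cX)} \le \|V\|_\infty$, to conclude $\Sp(H_s)\subset[-\|V\|_\infty,\infty)$.) Hence if $\lambda_\infty>\|V\|_\infty$, then $-\lambda_\infty$ is not in $\Sp(H_s)$ for any $s\le L$, so there are no conjugate points along $\Gamma_{4,L}$ and $A_{4,L}=0$.

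For part (i), the crossing condition is $\mathbb E_-^u(-\infty,\lambda)\cap\mathcal D\neq\{0\}$, and here one can compute directly. Suppose $v=(p_1,p_2)^\top\in\mathbb E_-^u(-\infty,\lambda)\cap\mathcal D$, so $p_1=0$ and $v$ lies in the unstable eigenspace of $A_-(\lambda)$. Writing $v$ in the eigenbasis of $A_-(\lambda)$ computed in the proof of Theorem \ref{th:lagrange}: the positive eigenvalues are the $\nu>0$ with $(-\lambda I_n + V_-)w = \nu^2 D w$, with eigenvectors of the form $(w,\nu D w)^\top$. A linear combination $\sum_j c_j (w_j,\nu_j D w_j)^\top$ lies in $\mathcal D$ iff $\sum_j c_j w_j = 0$; but then testing against $\sum_j \bar c_j (w_j, \nu_j D w_j)$ using that the $w_j$ can be taken $D$-orthonormal (the generalized eigenvectors of the symmetric pencil $(-\lambda I_n+V_-, D)$) forces $\sum_j |c_j|^2 \nu_j = 0$ hence all $c_j=0$, so $v=0$. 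Thus $\mathbb E_-^u(-\infty,\lambda)$ is transverse to $\mathcal D$ for every $\lambda\in[-\lambda_\infty,0]$, there are no crossings along $\Gamma_{1,L}$, and $A_{1,L}=0$. (One small point to handle in the write-up is the possibility of repeated $\nu^2$; then one works with the full generalized $D$-orthogonal eigenbasis of the symmetric matrix $D^{-1/2}(-\lambda I_n+V_-)D^{-1/2}$, which is always available, so no genuine obstruction arises.)

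I do not expect any serious obstacle here: both statements are soft consequences of positivity ($D>0$, $V_\pm>0$) combined with the dictionary between crossings and eigenvalues already established. The only mild care needed is in part (i), where one must verify transversality of the unstable subspace to $\mathcal D$ uniformly in $\lambda$, but this follows from the same quadratic-form computation that underlies the dimension count in Theorem \ref{th:lagrange}.
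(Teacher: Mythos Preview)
Your proposal is correct and follows essentially the same approach as the paper. For (ii) the paper applies Theorem~\ref{KatoTh} to $H_{(0)s}=-D\,d^2/dx^2$ (whose spectrum is $[0,\infty)$) exactly as in your alternative argument; your direct quadratic-form estimate is an equivalent, slightly more elementary route to the same bound $\Sp(H_s)\subset[-\|V\|_\infty,\infty)$. For (i) the paper also computes the eigenvectors of $A_-(\lambda)$ explicitly and observes that their first components form a basis of $\bbR^n$, so the span cannot meet $\cD$; your phrasing via the symmetric pencil $(-\lambda I_n+V_-,D)$ and $D$-orthonormality is a cleaner way to handle general diagonal $D$, but the underlying idea---the first components of the unstable eigenvectors are linearly independent---is identical.
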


\begin{remark}
In fact, the proof will show that there are no crossings on $\Gamma_{1,L}$ or $\Gamma_{4,L}$. This means the result $A_{1,L}=A_{4,L}=0$ is not due to cancellation. 
\end{remark}

\begin{proof}
(i) This is equivalent to showing that the unstable subspace $\mathbb E_-^u(-\infty,\lambda)$ of
$$
	A_-(\lambda) = \begin{pmatrix} 0_{n} & D^{-1} \\ -\lambda I_{n} +  V_- & 0_{n} \end{pmatrix}
$$
has trivial intersection with $\mathcal D$ for any $\lambda \leq 0$. Let $U_k$ denote an eigenvector for $V_-$ with eigenvalue $k$. Since $V_-$ is symmetric, the collection $\{U_k\}_{k\in\Sp(V_-)}$ forms a basis for $\R^n$. From Hypothesis (H2) we have $k>0$, hence $k-\lambda > 0$. Then $\nu_\pm = \pm\sqrt{k-\lambda}$ is an eigenvalue for $A_-(\lambda)$ with eigenvector 
$$
V_k^\pm(\lambda) = \begin{pmatrix} U_k \\ \pm\sqrt{k-\lambda}DU_k\end{pmatrix}.
$$
Then $\text{span}\{V_k^\pm\} \cap\mathcal D = \{0\}$ since $\{U_k\}$ forms a basis of $\R^n$. 
%In the case $k=\lambda$ we only get one eigenvector and one generalized eigenvector associated with the eigenvalue $\nu=0$
%$$
%V_{k,1}(\lambda) = \begin{pmatrix} U_k \\ 0_n\end{pmatrix} \qquad V_{k,2}(\lambda) = \begin{pmatrix} 0_n\\U_k\end{pmatrix}.
%$$

(ii) Let $H_{(0)s}=-D\frac{d^2}{dx^2}$ with $\dom(H_{(0)s})=\dom(H_{s})$, as defined in \eqref{defLD}. By reflecting and translating, for any $s\in\bbR$ the spectrum of $H_{(0)s}$ is seen to be the same as $\Sp(-D\frac{\mathrm d^2}{\mathrm dx^2})$ on $L^2([0,+\infty);\R^n)$, which is well known to be $[0,+\infty)$; see, e.g., \cite[Theorem 6.33]{Wbook}.

By Theorem \ref{KatoTh} we infer
\begin{equation}\label{KatTh}
\dist\big(\Sp(H_{s}),\, \Sp(H_{(0)s})\big)\le\|V\|_{\cB(L^2((-\infty,s];\R^n))}\le\|V\|_\infty
\end{equation}
where $\|V\|_\infty$ is s-independent, hence $\Sp(H_{s})\subset[-\|V\|_\infty,+\infty)$. By Proposition \ref{cor:mult}, $s$ is a $\lambda$-conjugate point if and only if $\lambda$ is an eigenvalue of $H_{s}$. This means there are no $\lambda$-conjugate points satisfying $\lambda < -\|V\|_\infty$, and so $A_{4,L} = 0$.
\end{proof}

\section{Counting eigenvalues for $H_L$ via the Maslov index}\label{mmi}

We are now ready to state our first main result, Theorem~\ref{th:mas2}, which relates the Morse and Maslov indices of $H_L$.
\begin{theorem} \label{th:mas2}
Consider the operator $H_L$ defined in \eqref{ShEqL}. Fix $L<+\infty$ and $\lambda_\infty>0$ as in Lemma~\ref{lem:llss}, and let $A_{i,L} = \mi(\Gamma_{i,L},\mathcal D)$. Then the following assertions hold.
\renewcommand{\labelenumi}{{(\roman{enumi})}}
\begin{enumerate}
\item The Maslov index of the curve $\Gamma_L$ is zero.
\item $A_{3,L} = -A_{2,L}$.
\item $A_{3,L}\le0$ and $|A_{3,L}|$ is equal to the number of nonpositive eigenvalues for $H_L$, counting multiplicities:
 $$|A_{3,L}|=\mo\big(H_L\big)+\dim\ker\big(H_L\big).$$
\item $A_{2,L}\ge0$ and $A_{2,L}$ is equal to the number of the conjugate points in $(-\infty,L]$, counting multiplicities; see Definition~\ref{def:cpev}(1).
\item The Morse index $\mo(H_L)$ is equal to the number of conjugate points in $(-\infty,L)$, counting multiplicities.
\end{enumerate}
\end{theorem}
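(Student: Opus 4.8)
The plan is to derive (v) directly from items (ii), (iii), (iv) of the theorem, which have already been established; no new analytic input is required, and the whole content of the argument is bookkeeping at the corner $(\lambda,s)=(0,L)$ of the rectangle.

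First I would combine (ii) and (iv). Since $A_{3,L}=-A_{2,L}$ by (ii) and $A_{2,L}\ge0$ by (iv), we have $|A_{3,L}|=A_{2,L}$. By (iii), $|A_{3,L}|=\mo(H_L)+\dim\ker(H_L)$, and by (iv), $A_{2,L}$ equals the number of conjugate points in $(-\infty,L]$, counted with multiplicity. Hence
\[
	\mo(H_L)+\dim\ker(H_L)=\#\{\text{conjugate points in }(-\infty,L],\ \text{with multiplicity}\}.
\]

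Next I would isolate the endpoint $s=L$. By Definition~\ref{def:cpev}, $s=L$ is a conjugate point precisely when $\Phi_L(Y_0)\cap\mathcal D\neq\{0\}$; by Proposition~\ref{cor:mult} with $\lambda=0$, this happens if and only if $0\in\Sp(H_L)$, and the multiplicity of the conjugate point equals $\dim\big(\Phi_L(Y_0)\cap\mathcal D\big)=\dim\ker(H_L)$. Therefore the number of conjugate points in $(-\infty,L]$, counted with multiplicity, equals the same count taken over the open interval $(-\infty,L)$ plus $\dim\ker(H_L)$. Substituting this into the displayed identity and cancelling $\dim\ker(H_L)$ from both sides gives
\[
	\mo(H_L)=\#\{\text{conjugate points in }(-\infty,L),\ \text{with multiplicity}\},
\]
which is exactly assertion (v); finiteness of the right-hand side is automatic, since $A_{2,L}$ is a finite integer by (iv).

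I do not expect a genuine obstacle in this step. The one point that needs care is that the term $\dim\ker(H_L)$ enters both (iii) and (iv): in (iii) because $\Gamma_{3,L}$ begins at the eigenvalue $\lambda=0$ of $H_L$, and in (iv) because the terminal crossing of $\Gamma_{2,L}$ at $s=L$ (which is positive, by Lemma~\ref{lem:scross}) contributes its full multiplicity. One must verify that these two occurrences of $\dim\ker(H_L)$ are literally the same number so that they cancel, and this is precisely the identification, from Proposition~\ref{cor:mult}, of the multiplicity of $0$ as an eigenvalue of $H_L$ with $\dim\big(\Phi_L(Y_0)\cap\mathcal D\big)$.
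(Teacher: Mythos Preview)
Your proposal is correct and matches the paper's approach: the paper's proof of (v) is the single sentence ``Finally, (v) follows from (ii)--(iv),'' and your argument is precisely the bookkeeping that unpacks this, correctly identifying via Proposition~\ref{cor:mult} that the endpoint contribution $\dim\ker(H_L)$ appearing in both (iii) and (iv) is the same quantity and cancels. You have supplied more detail than the paper does, but the route is identical.
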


\begin{remark}\label{rem:sign}
In Theorem~\ref{th:mas2} and the lemmas below, the assumption $0\not\in\Sp(H_L)$ is not required. From Proposition \ref{cor:mult}, we see that $0 \in \Sp(H_L)$ precisely when $s=L$ is a conjugate point. In Figure \ref{F1}, this corresponds to a crossing at the top right corner. Such a crossing will contribute equally to both $A_{2,L}$ and $|A_{3,L}|$, so that its net effect on $\mi(\Gamma_L, \mathcal D)$ is zero. This cancellation explains why these endpoint contributions are present in (iii) and (iv) but not in (v). That these terms appear at all is a consequence of our choice of symplectic form
\begin{equation*}
	\Omega = \begin{pmatrix} 0_n & -I_n \\ I_n & 0_n \end{pmatrix},
\end{equation*}
as opposed to $-\Omega$, and the sign conventions in our definition of the Maslov index; see Definition \ref{def:maslov} and Remark \ref{rem:endpoints} above, and also \cite[Section 6.1]{CJLS}. The monotonicity computations in Lemmas \ref{lem:cross} and \ref{lem:scross} are also affected by this choice of symplectic form; much of the existing literature uses $-\Omega$, with respect to which these paths are negative definite.
\end{remark}
%\changed{Is the above remark correct?}
%\begin{remark}
%In Theorem~\ref{th:mas2} and all supporting lemmas below one might think that the technical assumption $0\not\in\Sp(H_L)$ is required. However, by inspecting the definition of the Maslov index, \eqref{def:maslov}, one sees that if $0\not\in\Sp(H_L)$ then it does not contribute to the Maslov index; this is because the crossing at $(\lambda,s) = (0,L)$ contributes opposites signs as the right end-point along $\Gamma_{2,L}$ and the left end-point along $\Gamma_{3,L}$. This fact has been remarked in existing Maslov index literature; c.f. \changed{Yuri: you mentioned that there's a paper between you, Chris, and Alim (and maybe also Graham?) where you've discussed this fact before. What is the reference?}
%\end{remark}

The proof of Theorem~\ref{th:mas2} relies on the monotonicity of the Maslov index with respect to the parameters $\lambda$ and $s$, which we establish in Lemmas~\ref{lem:cross} and \ref{lem:scross}, respectively. We begin with $\lambda$, following the strategy of \cite[Lemma 4.7]{DJ11}.

\begin{lemma}\label{lem:cross}
For any fixed $s\in(-\infty,L]$, the path $\lambda \mapsto \Phi_s(Y_\lambda)$ is positive definite. In particular, $A_{3,L}\le0$ and
\[
	|A_{3,L}| = \sum_{\lambda \leq 0} \dim \big( \Phi_L(Y_\lambda) \cap \mathcal D \big).
\]
\end{lemma}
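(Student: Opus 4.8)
The plan is to compute the crossing form $Q_\cM$ along the path $\lambda \mapsto \Phi_s(Y_\lambda)$ directly from the defining formula \eqref{QMF} and show it is positive definite, which immediately gives both conclusions: positivity forces all crossings on $\Gamma_{3,L}$ to have the same sign (so $|A_{3,L}|$ equals the total number of crossings), and orienting $\Gamma_{3,L}$ from $\lambda=0$ down to $\lambda=-\lambda_\infty$ (as in Figure~\ref{F1}) turns each positive crossing into a negative contribution, giving $A_{3,L}\le 0$. The count $|A_{3,L}| = \sum_{\lambda\le 0}\dim(\Phi_L(Y_\lambda)\cap\mathcal D)$ then follows from Remark~\ref{rem:multiplicity} together with Proposition~\ref{cor:mult}, which identifies these intersection dimensions with eigenvalue multiplicities of $H_L$.

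The core computation is the following. Fix a crossing $\lambda_0$ and let $\bp(\cdot,\lambda)$ be a family of solutions in $Y_\lambda$, depending smoothly on $\lambda$, with $v = \bp(s,\lambda_0) \in \Phi_s(Y_{\lambda_0})\cap\mathcal D$. Differentiating the identity $\langle \bp(s,\lambda), \Omega\, \bw(s,\lambda)\rangle$ in $\lambda$ and evaluating at $\lambda_0$, one finds (using that $v$ lies in the Lagrangian plane, so only the ``graph'' direction contributes) that
\[
	Q_\cM(v,v) = \frac{d}{d\lambda}\Big|_{\lambda_0} \big\langle \bp(s,\lambda),\,\Omega\, \partial_\lambda\bp(s,\lambda_0)\big\rangle .
\]
Exactly as in the proof of Theorem~\ref{th:lagrange}, rewrite this boundary term as an integral over $(-\infty,s]$ of $\tfrac{d}{dx}\langle \bp, \Omega\,\partial_\lambda\bp\rangle$. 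Using $\bp' = A(x,\lambda)\bp$ and $\partial_\lambda\bp' = A\,\partial_\lambda\bp + (\partial_\lambda A)\bp$, together with $\Omega^\top = -\Omega$ and the symmetry relation \eqref{symm}, all terms involving $A$ cancel and one is left with
\[
	Q_\cM(v,v) = \int_{-\infty}^{s} \big\langle \bp(x,\lambda_0),\, \Omega\, (\partial_\lambda A)\, \bp(x,\lambda_0)\big\rangle\, dx .
\]
Since $\partial_\lambda A = \begin{pmatrix} 0_n & 0_n \\ -I_n & 0_n\end{pmatrix}$, a direct calculation gives $\Omega\,\partial_\lambda A = \begin{pmatrix} I_n & 0_n \\ 0_n & 0_n\end{pmatrix}$, so the integrand is $|p_1(x,\lambda_0)|^2 \ge 0$, where $\bp = (p_1,p_2)^\top$. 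Thus $Q_\cM(v,v) = \int_{-\infty}^s |p_1(x)|^2\,dx \ge 0$, and it vanishes only if $p_1 \equiv 0$ on $(-\infty,s]$; but then $p_2' = D^{-1}\!\cdot 0$-type reasoning via \eqref{ShEqp} (the first block row gives $p_1' = D^{-1}p_2$, forcing $p_2 \equiv 0$) shows $\bp \equiv 0$, contradicting $v\ne 0$. Hence $Q_\cM$ is positive definite.

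I expect the main obstacle to be purely bookkeeping rather than conceptual: one must set up the family $\bp(\cdot,\lambda)$ carefully so that it is genuinely $C^1$ in $\lambda$ near $\lambda_0$ (this is guaranteed by Lemma~\ref{lem:dich}, and one can work in a local frame for the dichotomy subspace $\mathbb E_-^u(\cdot,\lambda)$), and verify that the formula \eqref{QMF} for the crossing form, which is stated in terms of the graph map $\phi(t)$ into a transversal $W$, really does reduce to $\tfrac{d}{d\lambda}\langle \bp, \Omega\,\partial_\lambda\bp\rangle$ when restricted to $\Phi_s(Y_{\lambda_0})\cap\mathcal D$ — this is a standard identity (see \cite{rs93}), the point being that the extra terms from differentiating the $\Phi_s(Y_{\lambda_0})$-component of $v$ pair to zero against $\Omega v$ because $v$ lies in a Lagrangian plane. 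The convergence of the integral at $-\infty$ is immediate since $\bp \in Y_{\lambda_0}$ decays exponentially. Once $Q_\cM > 0$ is established, the stated consequences are formal applications of Definition~\ref{def:maslov}, Remark~\ref{rem:multiplicity}, and Proposition~\ref{cor:mult}.
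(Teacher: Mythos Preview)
Your approach is essentially identical to the paper's: both compute the crossing form by writing $Q_\cM(v,v)=\omega(v,\partial_\lambda v)=\langle \bp(s,\lambda_0),\,\Omega\,\partial_\lambda\bp(s,\lambda_0)\rangle$ as the integral over $(-\infty,s]$ of $\tfrac{d}{dx}\langle\bp,\Omega\,\partial_\lambda\bp\rangle$, use the symmetry \eqref{symm} to cancel the $A$-terms, and arrive at $\int_{-\infty}^s |p_1|^2\,dx>0$, with the sign of $A_{3,L}$ coming from the reversed orientation of $\Gamma_{3,L}$. One slip to fix: your displayed formula $Q_\cM(v,v)=\tfrac{d}{d\lambda}\big|_{\lambda_0}\langle\bp(s,\lambda),\Omega\,\partial_\lambda\bp(s,\lambda_0)\rangle$ has a spurious outer $\tfrac{d}{d\lambda}$ (as written it would vanish by skew-symmetry of $\Omega$); the correct expression is simply $\langle\bp(s,\lambda_0),\Omega\,\partial_\lambda\bp(s,\lambda_0)\rangle$, which is indeed what you use in the subsequent computation.
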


\begin{proof}
Let $\lambda_0\in[-\lambda_\infty,0]$ be a crossing, so that $\Phi_s(Y_{\lambda_0})\cap \mathcal D\neq\{0\}$. Let $W$ be a subspace in $\R^{2n}$ transversal to $\Phi_s(Y_{\lambda_0})$.
Then $W$ is transversal to $\Phi_s(Y_\lambda)$ for all $\lambda\in[\lambda_0-\varepsilon,\lambda_0+\varepsilon]$ for $\varepsilon>0$ small enough. Thus, there exists a smooth family of matrices, $\phi(\lambda)$, for $\lambda\in[\lambda_0-\varepsilon,\lambda_0+\varepsilon]$, viewed as operators $\phi(\lambda)\colon \Phi_s(Y_{\lambda_0})\to W$, such that $\Phi_s(Y_\lambda)$ is the graph of $\phi(\lambda)$. Fix any nonzero ${\rm v}\in\Phi_s(Y_{\lambda_0})\cap \mathcal D$ and consider the curve $v(\lambda)={\rm v}+\phi(\lambda){\rm v}\in\Phi_s(Y_\lambda)$ for $\lambda\in[\lambda_0-\varepsilon,\lambda_0+\varepsilon]$ with $v(\lambda_0)={\rm v}$. By the definition of $Y_\lambda$, there is a family of solutions
$\bp(x;\lambda)$ of \eqref{ShEqp} such that
$v(\lambda)=\Phi_s\big(\bp(x;\lambda)\big)$. We claim that
\begin{equation}\label{negdef}
\omega\big(v(\lambda_0),\,\frac{\partial v}{\partial\lambda}(\lambda_0)\big)>0.
\end{equation}
Assuming the claim, we finish the proof as follows: Since for each  nonzero ${\rm v}\in\Phi_s(Y_{\lambda_0})\cap \mathcal D$ the crossing form $Q_\cM$ satisfies
\begin{align*}
Q_\cM({\rm v},{\rm v})&=\frac{d}{d\lambda}\Big|_{\lambda=\lambda_0}\omega({\rm v},\phi(\lambda){\rm v})=
\frac{d}{d\lambda}\Big|_{\lambda=\lambda_0}\omega({\rm v},{\rm v}+\phi(\lambda){\rm v})\\&=\omega\big(v(\lambda_0),\,\frac{\partial v}{\partial\lambda}(\lambda_0)\big)>0,
\end{align*}
the form is positive definite. Thus, the crossing $\lambda_0\in[-\lambda_\infty,0]$ is positive. In particular, taking into account that the path $\Gamma_{3,L}=\big\{\Phi_L(Y_{\lambda})\big\}_{\lambda=0}^{-\lambda_\infty}$ is parametrized by the parameter $\lambda$ {\em decreasing} from $0$ to $-\lambda_\infty$, each crossing $\lambda_0$ along $\Gamma_{3,L}$ is negative so that $A_{3,L} \le 0$.

Starting the proof of claim \eqref{negdef}, for the solution $\bp=\bp(x;\lambda)$ we compute the $\lambda$-derivative (for brevity, denoted below by dot) in equation \eqref{ShEqp}, and obtain the equation
\begin{equation}\label{lder}
\dot{\bp}'(x;\lambda)\big|_{\lambda=\lambda_0}=A(x;\lambda_0)\dot{\bp}(x;\lambda_0)+\sigma_0\bp(x;\lambda_0);
\end{equation}
where $\sigma_0=\begin{pmatrix}0_n&0_n\\-I_n&0_n\end{pmatrix}$. We also recall the definition $\Omega = \begin{pmatrix} 0_n&-I_n\\I_n&0_n\end{pmatrix}$ from \eqref{eq:Omega}, and will use the fact that $\Omega^\top = -\Omega$ and $p_1\in L^2((-\infty,L];\R^n)$. Then
\begin{align*}
\omega\big(v(\lambda_0),\,\frac{\partial v}{\partial\lambda}(\lambda_0)\big)&=\omega(\bp(s;\lambda),\,\dot\bp(s;\lambda))\big|_{\lambda=\lambda_0}=\langle\bp(s;\lambda),\,\Omega\dot\bp(s;\lambda)\rangle\big|_{\lambda=\lambda_0}\\
&=-\langle\Omega\bp(s;\lambda),\, \dot\bp(s;\lambda)\rangle\big|_{\lambda=\lambda_0}\\
&=-\int_{-\infty}^s \frac d{dx}\langle\Omega\bp(x;\lambda),\, \dot\bp(x;\lambda)\rangle\big|_{\lambda=\lambda_0} \,dx\\
&=-\int_{-\infty}^s\langle \Omega\bp(x;\lambda), \,\dot\bp'(x;\lambda)\rangle\big|_{\lambda=\lambda_0}\,dx\\
&\quad-\int_{-\infty}^s\langle \Omega\bp'(x;\lambda),\,\dot\bp(x;\lambda)\rangle\big|_{\lambda=\lambda_0}\,dx\\
&=-\underbrace{\int_{-\infty}^s\langle  \Omega\bp(x;\lambda_0), \,A(x,\lambda)\dot{\bp}(x;\lambda)\big|_{\lambda=\lambda_0}+\sigma_0\bp(x;\lambda_0)\rangle\,dx}_{\eqref{lder}} \\
&\quad-\underbrace{\int_{-\infty}^s\langle\Omega A(x,\lambda)\bp(x;\lambda_0),\,\dot\bp(x;\lambda)\big|_{\lambda=\lambda_0}\rangle\,dx}_{ \eqref{ShEqp}}\\
&=\int_{-\infty}^s\langle \left[\big(\Omega A(x,\lambda)\big)^\top-\Omega A(x,\lambda)\right]\bp(x;\lambda_0),\, \dot{\bp}(x;\lambda)\big|_{\lambda=\lambda_0}\rangle\,dx\\
&\quad-\int_{-\infty}^s\langle \Omega\bp(x;\lambda_0),\,\sigma_0\bp(x;\lambda_0)\rangle\,dx\\
&=-\underbrace{\int_{-\infty}^s\langle \Omega\bp(x;\lambda_0),\,\sigma_0\bp(x;\lambda_0) \rangle\,dx}_{\eqref{symm}}\\
&=\int_{-\infty}^s\langle \bp(x;\lambda_0),\, \Omega\sigma_0\bp(x;\lambda_0)\rangle\,dx\\
&=\int_{-\infty}^s\|p_1(x;\lambda_0)\|^2_{\R^{n}}\,dx>0%\qquad\qquad\qquad\quad\text{ (because $\bp=(p_1,p_2)^\top$)}
\end{align*}
thus completing the proof of \eqref{negdef} and the lemma.
\end{proof}

We will now establish  monotonicity of the Maslov index with respect to the parameter $s$.

\begin{lemma}\label{lem:scross}
For any fixed $\lambda\in[-\lambda_\infty,0]$, the path $s \mapsto \Phi_s(Y_\lambda)$ is positive. In particular, $A_{2,L} \geq 0$ and
\[
	A_{2,L} = \sum_{s \leq L} \dim \big( \Phi_s(Y_{0}) \cap \mathcal D \big).
\]
\end{lemma}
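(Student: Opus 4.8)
The plan is to imitate the argument of Lemma~\ref{lem:cross}, computing the crossing form of the path $s\mapsto\Phi_s(Y_\lambda)$ at an arbitrary crossing and showing it is positive definite; the statements about $A_{2,L}$ then follow from the positive‑curve formula in Remark~\ref{rem:multiplicity} together with Lemma~\ref{lem:llss}(i). The computation should be shorter than that of Lemma~\ref{lem:cross} because $\lambda$ is held fixed, so no $\lambda$‑derivative of \eqref{ShEqp} is needed — only differentiation of the flow along the $x$‑variable.

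Fix $\lambda\in[-\lambda_\infty,0]$ and let $s_0\in(-\infty,L]$ be a crossing, so there is a nonzero $\mathrm v\in\Phi_{s_0}(Y_\lambda)\cap\mathcal D$. By definition of $Y_\lambda$, write $\mathrm v=\bp(s_0)$ for a solution $\bp=(p_1,p_2)^\top$ of \eqref{ShEqp} that decays at $-\infty$; membership in $\mathcal D$ means $p_1(s_0)=0$. Following the construction of the crossing form \eqref{QMF}, choose $W$ transversal to $\Phi_{s_0}(Y_\lambda)$, represent $\Phi_s(Y_\lambda)$ as the graph of $\phi(s)\colon\Phi_{s_0}(Y_\lambda)\to W$ for $|s-s_0|$ small, and use that $Q_\cM(\mathrm v,\mathrm v)=\omega\big(v(s_0),\dot v(s_0)\big)$ for \emph{any} $C^1$ curve $s\mapsto v(s)\in\Phi_s(Y_\lambda)$ with $v(s_0)=\mathrm v$ — the value is independent of the choice because the difference of two such curves lies in $\Phi_s(Y_\lambda)$ and vanishes at $s_0$. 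Taking $v(s):=\bp(s)$ gives
$$
Q_\cM(\mathrm v,\mathrm v)=\omega\big(\bp(s_0),\bp'(s_0)\big)=\big\langle\bp(s_0),\,\Omega A(s_0,\lambda)\bp(s_0)\big\rangle .
$$

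A direct block computation from \eqref{eq:Omega} and \eqref{ShEqp} yields
$$
\Omega A(x,\lambda)=\begin{pmatrix}\lambda I_n-V(x) & 0_n\\ 0_n & D^{-1}\end{pmatrix},
$$
so $Q_\cM(\mathrm v,\mathrm v)=\langle p_1(s_0),(\lambda I_n-V(s_0))p_1(s_0)\rangle+\langle p_2(s_0),D^{-1}p_2(s_0)\rangle$. Since $p_1(s_0)=0$, only the second term survives, and it is $\geq 0$ because $D^{-1}>0$; it is in fact strictly positive, since $p_2(s_0)=0$ would force $\bp(s_0)=0$ and hence $\bp\equiv0$ by uniqueness, contradicting $\mathrm v\neq0$. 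Thus every crossing of $s\mapsto\Phi_s(Y_\lambda)$ is positive, which is the first assertion. For the displayed identity, recall that $\Gamma_{2,L}=\{\Phi_s(Y_0)\mid s\in[-\infty,L]\}$ is traversed with $s$ increasing from $-\infty$ to $L$, and that by Lemma~\ref{lem:llss}(i) there is no crossing at $s=-\infty$; applying the positive‑curve formula of Remark~\ref{rem:multiplicity} gives $A_{2,L}=\sum_{s\leq L}\dim\big(\Phi_s(Y_0)\cap\mathcal D\big)\geq 0$.

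I do not anticipate a serious obstacle here. The two points needing a word of care are: (a) the passage from the graph representative to the solution $\bp$ itself as the test curve in the crossing form, which is just the reparametrization‑invariance of the Robbin--Salamon crossing form recorded above; and (b) the endpoint $s=-\infty$, handled by the continuity from Lemma~\ref{lem:dich} and the absence of crossings there from Lemma~\ref{lem:llss}(i). As in Lemma~\ref{lem:cross}, one should note that the positivity (rather than negativity) of this path is a consequence of working with $\Omega$ as opposed to $-\Omega$; cf.\ Remark~\ref{rem:sign}.
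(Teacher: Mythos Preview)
Your argument is correct and arrives at the same final expression $\langle D^{-1}p_2(s_0),p_2(s_0)\rangle>0$ as the paper, but by a cleaner route. The paper works with the graph curve $v(s)=\mathrm v+\phi(s)\mathrm v$ directly and represents it as $\bp(s;s)$ for an $s$-dependent family of solutions $\bp(\cdot;s)\in Y_\lambda$; this forces $\dot v(s)=\bp'(s;s)+\dot{\bp}(s;s)$, so the crossing form splits into two pieces $\alpha_1+\alpha_2$, and the paper must eliminate $\alpha_2$ via an integration from $-\infty$ using the symmetry \eqref{symm}. You instead invoke the well-known fact that $\omega(\mathrm v,\dot v(s_0))$ is the same for \emph{any} $C^1$ section $v(s)\in\Phi_s(Y_\lambda)$ with $v(s_0)=\mathrm v$, and then take the simplest such section, namely a single fixed solution $\bp(\cdot)$ evaluated at $s$. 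This collapses the computation to the $\alpha_1$ term immediately. Your justification of the invariance is correct (the difference $w(s)$ of two sections lies in the Lagrangian plane and vanishes at $s_0$, so $\dot w(s_0)\in\Phi_{s_0}(Y_\lambda)$ and $\omega(\mathrm v,\dot w(s_0))=0$), though in a final write-up you might make the step $\dot w(s_0)\in\Phi_{s_0}(Y_\lambda)$ explicit. The endpoint $s=-\infty$ and the passage to the displayed sum are handled exactly as you say.
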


\begin{proof}

Let $s_\ast\in(-\infty,L]$ be a crossing, so that $\Phi_{s_\ast}(Y_\lambda)\cap \mathcal D\neq\{0\}$. Let $W$ be a subspace in $\R^{2n}$ transversal to $\Phi_{s_\ast}(Y_\lambda)$.
Then $W$ is transversal to $\Phi_s(Y_\lambda)$ for all $s\in[s_\ast-\varepsilon,s_\ast+\varepsilon]$ for $\varepsilon>0$ small enough. Thus, there exists a smooth family of matrices, $\phi(s)$, for $s\in[s_\ast-\varepsilon,s_\ast+\varepsilon]$, viewed as operators $\phi(s)\colon \Phi_{s_\ast}(Y_{\lambda})\to W$, such that $\Phi_s(Y_\lambda)$ is the graph of $\phi(s)$. Fix any nonzero ${\rm v}\in\Phi_{s_\ast}(Y_{\lambda})\cap \mathcal D$ and consider the curve $v(s)={\rm v}+\phi(s){\rm v}\in\Phi_s(Y_\lambda)$ for $s\in[s_\ast-\varepsilon, s_\ast+\varepsilon]$ with $v(s_\ast)={\rm v}$. By the definition of $Y_\lambda$, there is a family of solutions
$\bp(x;s)$ of \eqref{ShEqp} such that
$v(s)=\Phi_s \big(\bp(x;s)\big)$. The notation $\bp(x;s)$ denotes the fact that $\bp(x;s)\in Y_\lambda$ depends on $s$ as a parameter and is a function of $x$; then $\Phi_s\big(\bp(x;s)\big) = \bp(s;s)$ is the solution $\bp(x;s)$ evaluated at $x=s$. Denoting by dot the derivative with respect to the variable $s$ so that
\begin{align*}
\dot{v}(s)&=\big(\bp'(x;s)+\dot{\bp}(x;s)
  \big)\big|_{x=s}.
\end{align*}
 we claim that
\begin{equation}\label{snegdef}
\omega\big(v(s_\ast),\,\dot{v}(s_\ast)\big)>0.
\end{equation}

Assuming the claim, we finish the proof as follows: Since for each  nonzero ${\rm v}\in\Phi_{s_\ast}(Y_{\lambda})\cap \mathcal D$ the crossing form $Q_\cM$ satisfies
\begin{align}\lb{crossing form}
Q_\cM({\rm v},{\rm v})&=\frac{d}{ds}\Big|_{s=s_\ast}\omega({\rm v},\phi(s){\rm v})=
\frac{d}{ds}\Big|_{s=s_\ast}\omega({\rm v},{\rm v}+\phi(s){\rm v})\\&=\omega\big(v(s_\ast),\,\dot{v}(s_\ast)\big)>0,
\end{align}
the form is positive definite. Thus each crossing $s_\ast\in(-\infty,L]$ is positive, so $A_{2,L}\ge0$. 

Starting the proof of claim \eqref{snegdef}, we remark that $s$-derivatives of the solutions $\bp(\cdot,s)$ of \eqref{ShEqp} satisfy the differential equations
\begin{equation}\label{eqpwdot}
\dot{\bp}'(x)=A(x,\lambda)\dot{\bp}(x).
\end{equation}
Using the definition of $\omega$, we split the expression for $\omega\big(v(s),\,\dot{v}(s)\big)$ as follows:
\begin{align*}
\omega (v(s), \dot{v}(s))=&\langle  v(s),\Omega\dot{v}(s)\rangle\\
=&
\underbrace{\langle \bp(s;s),\, \Omega\bp'(s;s)\rangle}_{\alpha_{_1}(s)}+
\underbrace{\langle \bp(s;s),\, \Omega\dot{\bp}(s;s)\rangle}_{\alpha_{_2}(s)}.
\end{align*}
Using \eqref{eqpwdot} and rearranging terms, the expressions $\alpha_j$ are computed as follows:
\begin{align*}
	\alpha_1(s) &= \langle \bp(s;s), \Omega A(s,\lambda){\bp}(s;s)\rangle \\
	\alpha_2(s) &= \langle \bp(s;s), \Omega\dot{\bp}(s;s)\rangle \\
	&= \int_{-\infty}^s\frac{d}{dx}\big(\langle \bp(x;s), \Omega\dot{\bp}(x;s)\rangle\big)\,dx \\
	&= \int_{-\infty}^s\big(\langle A(x,\lambda) \bp(x;s), \Omega\dot{\bp}(x;s)\rangle
	+\langle \bp(x;s), \Omega A(x,\lambda)\dot{\bp}(x;s)\rangle\big)\,dx \\
	&= \int_{-\infty}^s\left< \big[-\Omega A(x,\lambda) +\big(\Omega A(x,\lambda)\big)^\top\big]\bp(x;s), \dot{\bp}(x;s)\right>\,dx
\\&=0.
\end{align*}
Thus, $\langle  v(s),\,\Omega\dot{v}(s)\rangle=\alpha_1(s)$. Using the condition $p_1(s_\ast;s_\ast)=0$, which holds since $s_*$ is a conjugate point, we conclude that
\[
	\omega\big(v(s_\ast),\,\dot{v}(s_\ast)\big)=\alpha_1(s_\ast)=\langle D^{-1}p_2(s_\ast,s_\ast),p_2(s_\ast,s_\ast)\rangle_{\R^{n}}>0.
\]
\end{proof}
%\iffalse
%\begin{corollary}[Hadamard-type formula for the Dirichlet case]\lb{1.4}
%	Let $\lambda(s_*)$ be a given eigenvalue of multiplicity $m$ of the operator $H_{s_*}$ and let $\{\lambda_{j}(s)\}_{j=1}^m$ denote the eigenvalues of the operator $H_{s}$ for $s$ near $s_*$.    Then,
%	\begin{equation}
%	\frac{d\lambda_j(s)}{ds}\Big|_{s=s_*}=Q_\cM({\rm v_j},{\rm v_j})=-(\|q(-s_\ast,s_\ast)\|_{\R^{n}}^2+\|q(s_\ast,s_\ast)\|_{\R^{n}}^2)<0.
%	\end{equation}
%\end{corollary}
%This follows using \eqref{lastlines} in Lemma \ref{lem:scross}.
%\fi
We are now ready to prove Theorem~\ref{th:mas2}.
\begin{proof}[Proof of Theorem~\ref{th:mas2}]
Assertion (i) was already observed in \eqref{htpy}. Assertion (ii) follows from (i) and Lemma~\ref{lem:llss}. Assertion (iii) follows from Lemma~\ref{lem:cross} and Proposition~\ref{cor:mult}. Assertion (iv) follows from Lemma \ref{lem:scross} and the definition of a conjugate point. Finally, (v) follows from (ii)--(iv).
\end{proof}
%%%%%%%%%%%%%

We conclude this section with the observation that Theorem \ref{th:mas2} can be used to establish monotonicity of the eigenvalues of $H_L$.

\begin{remark}\label{rmk:mono}
Let $\lambda_1(L) \leq \lambda_2(L)\le\dots\le\lambda_{m}(L)\le0$ denote the nonpositive eigenvalues of the operator $H_L$. Since the essential spectrum of $H_L$ is strictly positive, the eigenvalues behave as the eigenvalues of a continuous family of finite-dimensional operators parametrized by $L$; see, e.g.,  \cite[Section II.5]{Kato}. A complete proof of this fact is given in \cite{LS16}. %It involves rescaling the operators $H_s$ onto $L^2(-\infty,1]$, an $s$-independent space, considering a rescaled family of the operators with the potential $s^2V(sx)$, $x\in(-\infty,1]$, $s\le L$, and proving that the rescaled family of the operators is continuous in the sense of \cite[Section VII.1.2]{Kato}.

We claim that each $\lambda_j(L)$ is a strictly decreasing function of $L$. Applying Theorem \ref{th:mas2}(iii) to the operator $H_L - \lambda_j(L)$, we see that $|A_{3,L}|  \geq j$. Since $|A_{3,L}| = A_{2,L}$ is a nondecreasing function of $L$, we have $|A_{3,L+\delta}| \geq j$ for any $\delta > 0$, which implies that the first $j$ eigenvalues of $H_{L+\delta}$ are less than or equal to $\lambda_j(L)$. In particular, $\lambda_j(L+\delta) \leq \lambda_j(L)$, which shows that $\lambda_j$ is non-increasing. We complete the proof of the claim by noting that $\lambda_j$ cannot be locally constant, since the conjugate points are regular, and hence isolated, by Lemma \ref{lem:scross}. For an alternative proof we refer to \cite{LS16} where the derivatives $d\lambda_j/dL$ are computed in terms of the crossing form.

\end{remark}

\section{Counting eigenvalues for $H$ via the Maslov index}\label{ss:line}
We now show that Theorem~\ref{th:mas2} can be extended to count eigenvalues for problems on the whole real line. Recall the Schr\"odinger operator $H = -D \frac{d^2}{dx^2} + V(x)$ on $L^2(\bbR; \bbR^n)$, as defined in \eqref{ShEq}.

%\begin{theorem}\label{main-line-thm} There exist $L_\infty$ and $\lambda_\infty$ such that for all $L>L_\infty$ the following hold:
%\begin{itemize}
%	\item[(i)] $A_{3,L}$ does not depend on $L$, and is equal to $-\mo(H)$;
%	\item[(ii)] $A_{2,L}$ does not depend on $L$ (and thus can be viewed as the Maslov index of \eqref{ShEq} on the full line). 
%\end{itemize}
%As a result,
%\[
%	\mo(H) =  \#\, \text{conjugate points in } (-\infty,+\infty).
%\]
%\end{theorem}

\begin{theorem}\label{main-line-thm} There exists $L_\infty \in \bbR$ such that for all $L>L_\infty$ the following hold:
\begin{itemize}
	\item[(i)] $\mo(H) = \mo(H_L)$;
	\item[(ii)] $L$ is not a conjugate point;
	\item[(iii)] $H_L$ is invertible.
\end{itemize}
In particular, the number of conjugate points is finite and independent of $L$, hence
\begin{align}
	\mo(H) =  \#\, \text{conjugate points in } (-\infty,+\infty).
\end{align}
\end{theorem}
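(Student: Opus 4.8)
The plan is to establish items (i)--(iii) using the theory of exponential dichotomies and standard facts about the Morse index of $H$ on the line, and then deduce the final formula by combining them with Theorem~\ref{th:mas2}(v). First I would fix $L_\infty$ large enough that two things hold simultaneously. On one hand, the operator $H$ on $\R$ has finitely many negative eigenvalues (by Hypothesis (H2) its essential spectrum is $[\min\Sp(V_\pm)\text{-ish region},\infty)\subset(0,\infty)$, so $\mo(H)<\infty$), each with an eigenfunction $u$ that decays exponentially at $\pm\infty$ at a rate governed by $\Sp(A_\pm(\lambda))$; the decay at $+\infty$ means $u(L)\to 0$ and $u$ is square-integrable on $(-\infty,L]$. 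On the other hand, for $L$ large the Dirichlet cut at $x=L$ should not destroy any of these eigenfunctions nor create new ones. Concretely, I would argue: any eigenfunction of $H_L$ extends (via the dichotomy subspace $\mathbb E_+^s$ on $[L,\infty)$) to an approximate eigenfunction of $H$ on $\R$, and conversely truncating an eigenfunction of $H$ and correcting it near $x=L$ gives a test function showing $H_L$ has the corresponding eigenvalue; making this quantitative with the exponential decay rates shows $\mo(H_L)\to\mo(H)$ as $L\to\infty$. Since both are integers, they are equal for $L$ large, giving (i).

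For (ii) and (iii): a conjugate point at $s=L$ means, by Proposition~\ref{cor:mult}, that $0\in\Sp(H_L)$, i.e. $\dim\ker H_L>0$. I would show this fails for large $L$ by the same comparison: if $0$ were an eigenvalue of $H_L$ for arbitrarily large $L$, the associated solution lies in $Y_0=\mathbb E_-^u(\cdot,0)$ and vanishes at $x=L$; but $\mathbb E_-^u(L,0)$ converges (as $L\to\infty$) to the corresponding subspace evaluated along a bounded solution, and one checks that a nonzero bounded solution of $Hu=0$ that decays at $-\infty$ cannot also vanish at a sequence of points $L_k\to\infty$ unless it decays at $+\infty$ — in which case it would be an eigenfunction of $H$ with eigenvalue $0$, contradicting $0\notin\Sp(H)$ for $L_\infty$ chosen to avoid the (finitely many or empty) eigenvalue $0$ of $H$. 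Actually the cleanest route: enlarge $L_\infty$ if necessary so that $0$ is not an eigenvalue of $H$; then the generalized eigenspaces $\mathbb E_-^u(\cdot,0)$ and $\mathbb E_+^s(\cdot,0)$ are transverse (their intersection being $\ker H$), and for $L$ large the plane $\Phi_L(Y_0)=\mathbb E_-^u(L,0)$ is $C^0$-close to $\mathbb E_+^s(L,0)$-complement configuration that is transverse to $\mathcal D$; since transversality to $\mathcal D$ is an open condition and holds in the limit, it holds for large $L$. This gives (ii), hence $0\notin\Sp(H_L)$, which is (iii).

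Finally, once (i)--(iii) are in hand, Theorem~\ref{th:mas2}(v) says $\mo(H_L)$ equals the number of conjugate points in $(-\infty,L)$ counted with multiplicity. By (i) the left side is $\mo(H)$, independent of $L$; by (ii) there is no conjugate point at $L$ itself for $L>L_\infty$, and since the conjugate points are isolated (Lemma~\ref{lem:scross}) and their total count on $(-\infty,L)$ is the fixed finite number $\mo(H)$, no conjugate point can lie in $[L_\infty,\infty)$ at all. Therefore the conjugate points are confined to the bounded-below interval $(-\infty,L_\infty)$, are finite in number, and the count stabilizes, yielding
\[
	\mo(H) = \#\,\{\text{conjugate points in }(-\infty,+\infty)\},
\]
counted with multiplicity. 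The main obstacle I anticipate is (i): one must carefully track the exponential dichotomy on $[L,\infty)$ and quantify how the Dirichlet condition at $x=L$ perturbs the spectrum near $0$, i.e. a robustness-of-eigenvalues argument under truncation of the domain, using Hypothesis (H3) to control the convergence of the dichotomy projections. Everything after that is a short deduction from the already-established Theorem~\ref{th:mas2} and the isolation of crossings.
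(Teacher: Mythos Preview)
Your overall architecture---prove (i), then derive (ii), (iii), and the final formula from (i) together with Theorem~\ref{th:mas2}(v)---matches the paper's. The differences and gaps are in the execution of (i) and (ii).

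\textbf{Part (i).} The paper does not use a variational/test-function comparison. Instead it interprets $\mo(H)$ as the number of negative zeros of the Evans function $\mathbb E_+^s(0,\lambda)\wedge\mathbb E_-^u(0,\lambda)$ and $\mo(H_L)$ as the number of negative zeros of $\mathcal D\wedge\mathbb E_-^u(L,\lambda)$, then shows (following \cite{SaS00}) that the propagated Dirichlet plane $\phi(0,L;\lambda)\mathcal D$ converges exponentially to $\mathbb E_+^s(0,\lambda)$ as $L\to\infty$, so Rouch\'e's theorem equates the two zero counts. The eigenvalue monotonicity of Remark~\ref{rmk:mono} is used to rule out negative zeros accumulating at $\lambda=0$. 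Your variational sketch could be made to work in this selfadjoint setting (indeed the paper cites \cite{SW95} for exactly that route), but as written it is not quantitative enough: truncating an eigenfunction of $H$ produces a test function, which gives an upper bound on some Rayleigh quotient, not an eigenvalue of $H_L$ per se; you would still need a two-sided count with multiplicities.

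\textbf{Part (ii).} There is a genuine error here. You write ``enlarge $L_\infty$ if necessary so that $0$ is not an eigenvalue of $H$,'' but $H$ is the full-line operator and does not depend on $L_\infty$; whether $0\in\Sp(H)$ is a fixed fact about the problem, and the theorem must hold in either case. Your transversality argument for (ii) only covers the case $0\notin\Sp(H)$. The paper avoids this case split entirely by deducing (ii) from (i): once $\mo(H_L)=\mo(H)$ for all $L>L_\infty$, Theorem~\ref{th:mas2}(v) says the number of conjugate points in $(-\infty,L)$ is the constant $\mo(H)$, so there can be no conjugate point beyond $L_\infty$. You essentially write this deduction in your final paragraph, but you phrase it as \emph{using} (ii) rather than \emph{proving} it; simply reorder the logic and your separate argument for (ii) becomes unnecessary. (The paper also records, in a remark, the direct transversality argument you attempted, and there handles the case $0\in\Sp(H)$ by perturbing to $H+\varepsilon$ and invoking the monotonicity of conjugate points in $L$.)
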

Since $A_{2,L}$ converges as $L \to \infty$ to the number of conjugate points in $(-\infty,+\infty)$, we can interpret this number as the Maslov index for the whole-line problem.

In order to prove Theorem~\ref{main-line-thm} we use the following notation. For equation \eqref{ShEqp} 
\[
\bp' = A(x,\lambda) \bp, \quad A(x,\lambda)= \begin{pmatrix} 0_{n} & D^{-1} \\ -\lambda I_{n} +  V(x) & 0_{n} \end{pmatrix}
\]
we define the following subspaces
\begin{align*}
\mathbb E_+^s(x,\lambda) :& \text{ stable dichotomy subspace for $x\ge 0$}\\
\mathbb E_+^u(x,\lambda) :& \text{ unstable dichotomy subspace for $x\ge 0$}\\
\mathbb E_-^s(x,\lambda) :& \text{ stable dichotomy subspace for $x\le 0$}\\
\mathbb E_-^u(x,\lambda) :& \text{ unstable dichotomy subspace for $x\le 0$},
\end{align*}
recalling that $\mathbb E_-^u(s,\lambda) = \Phi_s(Y_\lambda)$.
We remark that although $\mathbb E_-^{s/u}(x,\lambda)$ and $\mathbb E_+^{s/u}(x,\lambda)$ are initially defined for $x\le0$ and $x\ge0$, respectively, the dichotomy subspaces can be propagated for all $x\in\R$. However, in general $(x,\lambda)\mapsto\mathbb E_-^u(x,\lambda)$ is not continuous on $(-\infty,+\infty]\times[-\lambda_\infty,0]$, and similarly for $(x,\lambda)\mapsto\mathbb E_+^s(x,\lambda)$ on $[-\infty,+\infty)\times[-\lambda_\infty,0]$. For more on this topic see \cite[Appendix]{HLS2}.

We also let $\mathbb E_\pm^{s/u} (\pm\infty,\lambda)$ denote the stable/unstable spectral subspaces of $A_\pm(\lambda)$.

\begin{lemma}\label{subspaceProperties}
The subspaces $\mathbb E_\pm^{s/u}(x,\lambda)$ have the following properties:
\begin{itemize}
\item[(i)] $\mathbb E_+^{s/u}(x,\lambda)\xrightarrow{x\to+\infty} \mathbb E_+^{s/u} (+\infty,\lambda)$ and $\mathbb E_-^{s/u}(x,\lambda)\xrightarrow{x\to-\infty}\mathbb E_-^{s/u} (-\infty,\lambda)$;
%Let $\mathbb E_+^{s/u} (+\infty,\lambda)$ denote the spectral subspaces of $A_+(\lambda)$ and $\mathbb E_-^{s/u} (-\infty,\lambda)$ denote the spectral subspaces of $A_-(\lambda)$. Then 
%\begin{align*}
%\mathbb E_+^{s/u}(x,\lambda)&\xrightarrow{x\to+\infty} \mathbb E_+^{s/u} (+\infty,\lambda)\quad\text{and}\\
%\mathbb E_-^{s/u}(x,\lambda)&\xrightarrow{x\to-\infty}\mathbb E_-^{s/u} (-\infty,\lambda).
%\end{align*}

\item[(ii)] $\mathbb E_+^{s/u}(+\infty,\lambda)\cap\mathcal D = \{0\}$ and $\mathbb E_-^{s/u}(-\infty,\lambda)\cap\mathcal D = \{0\}$, with $\mathcal D$ as defined in \eqref{DefX1}. %we recall $$\mathcal D = \left\{\begin{pmatrix}0_n\\v\end{pmatrix}: v\in\R^n\right\}.$$
\end{itemize}
\end{lemma}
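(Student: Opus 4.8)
The plan is to treat parts (i) and (ii) separately, as they rely on different ingredients: (i) is a general fact about exponential dichotomies approaching their asymptotic limits, while (ii) is an explicit linear-algebra computation at the constant-coefficient endpoints, entirely analogous to the argument already given in the proof of Lemma~\ref{lem:llss}(i).

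For part (i), I would invoke Hypothesis~\ref{h1}(H3) together with the roughness theory of exponential dichotomies. By (H3), the function $x \mapsto V(x) - V_\pm$ is integrable on $\bbR_\pm$, so the coefficient matrix $A(x,\lambda)$ converges to the constant matrix $A_\pm(\lambda)$ with integrable error as $x \to \pm\infty$. The asymptotic systems $\bp' = A_\pm(\lambda)\bp$ are hyperbolic (their spectrum avoids the imaginary axis, as verified in the proof of Theorem~\ref{th:lagrange} and Lemma~\ref{lem:llss}), so they have constant dichotomy subspaces $\mathbb E_\pm^{s/u}(\pm\infty,\lambda)$. Standard results on the persistence and continuity of dichotomies under asymptotically constant, integrable perturbations—the same results cited in the proof of Lemma~\ref{lem:dich}, e.g.\ \cite{Cop,DK,Pal1,Pal2}—then give that the dichotomy projections of \eqref{ShEqp} on $\bbR_\pm$ converge to the spectral projections of $A_\pm(\lambda)$ as $x \to \pm\infty$, which is exactly the claimed convergence of subspaces. (For the minus side this is essentially restated from the proof of Lemma~\ref{lem:dich}; the plus side is symmetric.)

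For part (ii), the argument is the explicit one from Lemma~\ref{lem:llss}(i), applied now to both $A_+(\lambda)$ and $A_-(\lambda)$ and to both the stable and unstable subspaces. Since $V_\pm$ is symmetric and positive-definite and $\lambda \le 0$, for each eigenvalue $k>0$ of $V_\pm$ with eigenvector $U_k$ one checks directly that $\nu_\pm = \pm\sqrt{k-\lambda}$ are eigenvalues of $A_\pm(\lambda)$ with eigenvectors $\big(U_k,\ \pm\sqrt{k-\lambda}\,D U_k\big)^\top$; these span all of $\R^{2n}$ as $k$ ranges over $\Sp(V_\pm)$ and the sign varies. The unstable subspace $\mathbb E_\pm^u(\pm\infty,\lambda)$ is the span of the $``+"$ vectors and the stable subspace the span of the $``-"$ vectors. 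In either case the first $n$ coordinates of any nonzero element are a nonzero linear combination of the basis $\{U_k\}$ of $\R^n$, hence are nonzero, so the intersection with $\mathcal D = \{p_1 = 0\}$ is trivial.

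I do not expect a serious obstacle here; the lemma is a packaging of facts that have already been established or used earlier in the paper. The only point requiring a little care is making sure, in part (i), that the convergence is genuinely a statement about the dichotomy subspaces of the \emph{variable-coefficient} system (not merely the asymptotic one), which is where Hypothesis~(H3) is essential—without integrability one gets existence of dichotomies but not necessarily convergence of the projections. Citing the roughness theory precisely, as in Lemma~\ref{lem:dich}, handles this cleanly.
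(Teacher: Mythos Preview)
Your proposal is correct and matches the paper's approach exactly: the paper dismisses (i) in one line as ``a well-known fact about dichotomy subspaces'' and for (ii) simply refers back to the computation in the proof of Lemma~\ref{lem:llss}, noting that the remaining cases are analogous. You have supplied the details the paper leaves implicit (the role of (H3) via roughness theory for (i), and the extension of the eigenvector computation to both signs and both endpoints for (ii)), but the underlying argument is identical.
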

\begin{proof}
%\leavevmode
%
(i) This is a well-known fact about dichotomy subspaces. 

(ii) The result $\mathbb E_-^u(-\infty,\lambda)\cap\mathcal D = \{0\}$ was already shown in the proof of Lemma \ref{lem:llss}(ii); the proof is analogous for the remaining cases. 
\end{proof}

\begin{proof}[Proof of Theorem~\ref{main-line-thm}]
%\leavevmode
%
(i) We follow the proof of a similar result from \cite{SaS00}. By construction, $\mo(H)$ is the number of negative zeros of the Evans function; that is, the number of $\lambda < 0$ such that $$\mathbb E_+^s(0,\lambda)\wedge\mathbb E_-^u(0,\lambda)=0.$$ Here the wedge product $\wedge$ is defined to be $\det \big[ v^+_1(\lambda) \, \cdots \, v^+_n(\lambda) \ v^-_1(\lambda) \, \cdots \, v^-_n(\lambda) \big]$, where $\{v^\pm_j(\lambda)\}$ are analytic bases for $\mathbb E_+^s(0,\lambda)$ and $\mathbb E_-^u(0,\lambda)$. A change of basis will simply multiply the determinant by a nonvanishing analytic function, so the zeros and their multiplicities are well defined. Similarly, $\mo(H_L)$ is equal to the number of negative zeros of the function
\[
	\mathcal D\wedge \mathbb E_-^u(L,\lambda).
\]
We claim that $\mathbb E_+^s(0,\lambda)\wedge\mathbb E_-^u(0,\lambda)$ and $\mathcal D\wedge \mathbb E_-^u(L,\lambda)$ have the same number of negative zeros, counting multiplicity, for sufficiently large values of $L$. We remark that $\lambda=0$ can not be a cluster point of the set of negative zeros of the function $\mathcal D\wedge \mathbb E_-^u(L,\lambda)$ as $L\to\infty$ since the zeros are decreasing functions of $L$ by  Remark \ref{rmk:mono}.

Let $\phi(x_1,x_2;\lambda)$ denote the propagator of the non-autonomous differential equation $y' = A(x,\lambda)y$. Also denote $\mathcal D_L(\lambda) = \phi(0,L;\lambda)\mathcal D\wedge\mathbb E_-^u(0,\lambda)$ and $\mathcal D_\infty(\lambda) = \mathbb E_+^s(0,\lambda)\wedge\mathbb E_-^u(0,\lambda)$, and choose an analytic basis $\{v_j^+(\lambda)\}$ of $\mathbb E_+^s(0,\lambda)$. Note that $\mathcal D\oplus\mathbb E_+^u(+\infty,\lambda) = \R^{2n}$ because $\mathbb E_+^u(+\infty,\lambda)\cap\mathcal D = \{0\}$ (Lemma~\ref{subspaceProperties}(ii)). Also, note that $\mathcal D\wedge \mathbb E_-^u(L,\lambda)$ and $\mathcal D_L(\lambda)$ have the same zeros since $\mathbb E_-^u(L,\lambda) = \phi(L,0;\lambda)\mathbb E_-^u(0,\lambda)$, hence
\[
	\mathcal D_{L}(\lambda)= \phi(0,L;\lambda)\mathcal D\wedge\phi(0,L;\lambda)\mathbb E_-^u(L,\lambda) = [\det\phi(0,L;\lambda)]\mathcal D\wedge \mathbb E_-^u(L,\lambda).
\]

We now follow the proof of \cite[Lemma 4.3]{SaS00}. It is known that $\mathbb E_+^{s/u}(L,\lambda)\to\mathbb E_+^{s/u}(+\infty,\lambda)$ exponentially as $L\to+\infty$; see \cite[Thm. 1]{SaS00}. Then, as in \cite[Thm. 2]{SaS00}, there exist unique vectors $w_j^+(\lambda)\in\mathbb E_+^u(L,\lambda)$ such that $\mathcal D=\text{span}\{\phi(L,0;\lambda)v_j^+(\lambda)+w_j(\lambda):j=1,\ldots,n\}$ and
\[
	\phi(0,L;\lambda) \mathcal D = \text{span}\{v_j^+(\lambda)+\phi(0,L;\lambda)w_j^+(\lambda): j=1,\ldots,n\}.
\]
Thus $\phi(0,L;\lambda)\mathcal D$ and $\mathbb E_+^s(0,\lambda)$ are $\exp(-\sigma_+L)$-close, where $\sigma_+$ is the rate of exponential  decay of solutions at $+\infty$, and so the set of cluster points as $L\to\infty$ of the negative zeros of $\mathcal D_L(\lambda)$ is equal to the set of negative zeros of $\mathcal D_\infty(\lambda)$, counting multiplicity, by Rouch\'e's Theorem as in \cite[Rmk 4.3]{SaS00}, and by Remark \ref{rmk:mono} as mentioned above. 

(ii) Combining the result from (i) with Theorem~\ref{th:mas2}(v), we see that
\[
	\mo(H) = \mo(H_L) = \#\, \text{conjugate points in } (-\infty,L)
\]
for any $L > L_\infty$. Since $\mo(H)$ does not depend on $L$, we conclude that there are no conjugate points in $(L_\infty,+\infty)$, and so $\mo(H)$ is equal to the number of conjugate points in $\bbR$. 

(iii) This is equivalent to (ii), since $H_L$ fails to be invertible precisely when $L$ is a conjugate point.
\end{proof}

\begin{remark}
The fact that $\mo(H_L)$ converges to $\mo(H)$ as $L\to+\infty$ has also been shown in \cite{SW95}. Here we used an alternative proof based on the arguments from \cite{SaS00} (see also \cite{BL99}) where the spectrum of the operator on the full line was approximated by the spectra of the operator on $[-L, L]$. Unlike \cite{SW95}, this approach generalizes to operators which are not selfadjoint; this is relevant as recent progress has been made in defining Maslov-like indices for such problems; see, for instance, \cite{CJ2017}
\end{remark}

\begin{remark}
It is also possible to prove Theorem~\ref{main-line-thm}(ii) directly (without using part (i)) by showing that $\mathbb E_-^u(x,0)\cap\mathcal D = \{0\}$ for all $x\ge L_\infty$. If $0\not\in \Sp(H)$, then $\mathbb E_+^s(x,0)\cap\mathbb E_-^u(x,0) = \{0\}$ for all $x\ge0$, which in turn shows that $\mathbb E_+^s(x,0)\oplus\mathbb E_-^u(x,0) = \R^{2n}$ for all $x\ge0$. Since the unstable dichotomy subspace on $\R_+$ can be taken to be any subspace which complements $\mathbb E_+^s(x,0)$, we may therefore choose $E_+^u(x,0) = \mathbb E_-^u(x,0)$. Then by Lemma~\ref{subspaceProperties}(ii) we have $\mathbb E_+^u(x,0)\cap\mathcal D = \{0\}$ for sufficiently large $x$, as required. If $0 \in \Sp(H)$, we apply the previous argument to the perturbed operator $H + \varepsilon$, to find that $H_L + \varepsilon$ has no conjugate points for $L > L_\infty$. Since the eigenvalues are decreasing functions of $L$, by Remark \ref{rmk:mono}, the conjugate points will move down the $s$-axis  as $\varepsilon$ decreases to zero; see Figure \ref{F1}. Therefore $H_L$ has no conjugate points for $L > L_\infty$.
%to the perturbed operator $H+\epsilon$, to find that the corresponding Maslov index $A_{2,L}(\epsilon)$ is constant for large $L$. As $\epsilon$ decreases to zero, the conjugate points move down the $s$-axis , so $A_{2,L} = A_{2,L}(0)$ is also independent of $L$.
\end{remark}

%%%%%%

\section{Instability of generic pulse solutions}\lb{ss:example}
Consider a reaction--diffusion system
\begin{align}\lb{eq:RD}
u_t = D u_{xx}+G(u),\quad u\in\R^n
\end{align}
where $G(u) = \nabla F(u)$ for some $C^2$ function $F\colon \R^n\to\R$, and $D$ is a diagonal diffusion matrix. We assume that there exists a stationary, spatially homogeneous solution $u_*(x,t) = u_0$ to \eqref{eq:RD}; without loss of generality we take $u_0=0$. We also assume that $k^2D - \nabla^2F(0) > 0$ for all $k \in \bbR$. This ensures that the spectrum of the linearization of \eqref{eq:RD} about $u_0$, which is given by
\[
\{\lambda \in \mathbb{R}: \det(k^2D + \lambda - \nabla^2 F(0))=0 \mbox{ for some } k \in \mathbb{R}\}, 
\]
lies in the open left half plane. We further suppose there is a stationary solution $\varphi^*(x,t)= \varphi^*(x)$ to \eqref{eq:RD} in $H^2(\bbR;\bbR^n)$. From the Sobolev embedding theorem and a bootstrap argument, $\varphi^*$ is at least $C^3$. Since $\nabla^2F(0)$ is nondegenerate, the invariant manifold theorem implies that $\varphi^*$ decays exponentially as $|x| \to \infty$. We thus call this a pulse, or pulse-type solution.
%We assume that $\varphi^*(x)$ is a classical solution to \eqref{eq:RD}; in particular $\varphi^*(x)$ is continuously differentiable. 
Due to our assumptions on the background state $u_0 = 0$, the essential spectrum of $\varphi^*$ is in the open left half plane. We claim that the pulse is unstable under a mild assumption which is generically satisfied. This generalizes the following classic result from Sturm--Liouville theory; c.f. \cite[\S 2.3.3.1]{KP}.
\begin{theorem}\lb{th:SL}
Suppose $\varphi^*(x)$ is a pulse-type solution to the scalar reaction--diffusion equation 
\begin{align}\lb{eq:1dRD}
u_t = u_{xx}+g(u), \quad u\in\R.
\end{align}
Then $\varphi^*(x)$ is unstable. 
\end{theorem}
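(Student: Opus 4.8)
The plan is to apply the half-line and whole-line machinery of Sections~\ref{mmi} and~\ref{ss:line} with $n=1$, $D=1$, and $V(x) = -g'(\varphi^*(x))$, so that $H = -\mathcal L$, where $\mathcal L = \partial_{xx} + g'(\varphi^*(x))$ is the linearization of~\eqref{eq:1dRD} about $\varphi^*$. First I would check that $V$ satisfies Hypothesis~\ref{h1}: it is continuous, and since $\varphi^*(x)\to 0$ the limits $V_\pm = -g'(0)$ exist and are positive because the standing assumption of this section, $k^2 - \nabla^2 F(0) > 0$ for all $k$, reads $k^2 - g'(0) > 0$ (evaluate at $k=0$, using $\nabla^2 F(0) = g'(0)$); finally, the exponential decay of $\varphi^*$, together with the $C^1$ dependence of $g'$, forces $V - V_\pm$ to decay exponentially, hence to lie in $L^1(\bbR_\pm)$. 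Thus Theorems~\ref{th:mas2} and~\ref{main-line-thm} apply to $H$.

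The key step is to produce a conjugate point from the pulse. Differentiating the profile equation $\varphi^*_{xx} + g(\varphi^*) = 0$ gives $H\varphi^*_x = 0$, and since $\varphi^*$ approaches the hyperbolic equilibrium $0$ along the (un)stable manifold of the profile ODE, $\varphi^*_x$ decays exponentially at $\pm\infty$; in particular the solution $\bp = (\varphi^*_x, \varphi^*_{xx})^\top$ of~\eqref{ShEqp} with $\lambda = 0$ lies in $Y_0$. Because $\varphi^*$ is a pulse it is continuous, tends to $0$ at $\pm\infty$, and is not identically zero, so it attains a nonzero global extremum at some $x_0\in\bbR$, where $\varphi^*_x(x_0) = 0$; in the notation~\eqref{Defpq} this says $p_1(x_0) = 0$, i.e.\ $\bp(x_0)\in\mathcal D$. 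Moreover $\bp(x_0)\ne 0$: if $\varphi^*_{xx}(x_0) = 0$ as well, then $\varphi^*_x$ would solve the linear second-order ODE $w'' + g'(\varphi^*)w = 0$ with vanishing Cauchy data at $x_0$, forcing $\varphi^*_x\equiv 0$ and contradicting that $\varphi^*$ is non-constant. Hence $\Phi_{x_0}(Y_0)\cap\mathcal D\ne\{0\}$, so $x_0$ is a conjugate point.

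To finish, pick $L > \max\{x_0, L_\infty\}$ with $L_\infty$ as in Theorem~\ref{main-line-thm}. Then $x_0\in(-\infty,L)$ is a conjugate point, so by Theorem~\ref{th:mas2}(v) we have $\mo(H_L)\ge 1$, and by Theorem~\ref{main-line-thm}(i), $\mo(H) = \mo(H_L)\ge 1$. Therefore $\mathcal L = -H$ has an eigenvalue in the open right half-plane, i.e.\ $\varphi^*$ is spectrally unstable; since spectral instability implies nonlinear instability for reaction--diffusion equations \cite{SS98}, $\varphi^*$ is unstable.

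Since all the structural work has already been done, this argument is short; the one point that genuinely needs attention is the non-degeneracy $\bp(x_0)\ne 0$, handled above via uniqueness for the linear variational ODE. It is worth stressing why a single conjugate point suffices: the monotonicity in $s$ proved in Lemma~\ref{lem:scross} ensures that every conjugate point contributes positively to $\mo(H_L)$, so there can be no cancellation --- this is exactly the advantage of taking $\mathcal D$ as the reference plane that was advertised in the introduction.
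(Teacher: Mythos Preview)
Your argument is correct. Note, however, that the paper does not actually prove Theorem~\ref{th:SL}: it is quoted as a classical Sturm--Liouville fact (with a citation to \cite[\S 2.3.3.1]{KP}) that the paper then sets out to generalize. So there is no ``paper's own proof'' to compare against directly.

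That said, it is worth contrasting your argument with the paper's proof of the generalization, Theorem~\ref{th:example}. Both produce a conjugate point and then invoke Theorems~\ref{th:mas2} and~\ref{main-line-thm}. The difference lies in \emph{how} the conjugate point is found. The paper, working in $\bbR^n$, needs \emph{all} components of $\partial_x\varphi^*$ to vanish simultaneously to force the determinant \eqref{intersection} to be zero regardless of the unknown $v_j$; it obtains this by imposing Hypothesis~\ref{h:unique}, deducing even-symmetry (Proposition~\ref{pp:symmetric}), and taking $x_0$ to be the axis of symmetry. You instead exploit that $n=1$: a scalar pulse must attain an interior extremum, giving $\varphi^*_x(x_0)=0$ directly by elementary calculus, with no structural hypothesis needed. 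Your route is strictly simpler in the scalar case and recovers the classical result without the genericity assumption; the paper's symmetry argument is what is required once $n>1$, where a single zero of one component no longer forces \eqref{intersection}. Your handling of the non-degeneracy $\bp(x_0)\neq 0$ via uniqueness for the variational ODE is clean; in the $n=1$ setting this is equivalent to observing that $\Phi_{x_0}(Y_0)$ is one-dimensional and hence \emph{equals} $\mathcal D$ at the crossing.
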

%We reproduce the proof here for completeness and to help the reader draw parallels between this classical result for scalar equations and our result using the Maslov index on a multi-component system. 
%\begin{proof}
%The linearization of \eqref{eq:1dRD} about $\varphi^*(x)$ is given by
%\begin{align}\lb{eq:1devalue}
%\lambda v = v_{xx}+g'(\varphi^*(x))v.
%\end{align}
%By differentiating \eqref{eq:1dRD} with respect to $x$ one can verify that $\varphi_x^*(x)$ is an eigenfunction for \eqref{eq:1devalue} associated with eigenvalue $\lambda=0$. The fact that $|\varphi^*(x)|\to0$ as $|x|\to+\infty$ and a straightforward application of the Mean Value Theorem show that $\varphi_x^*(x)$ crosses the $x$ axis at least once. Thus, by Sturm--Liouville theory, there must be at least one eigenvalue for \eqref{eq:1devalue} with $\lambda>0$.   
%\end{proof}
We now show how the Maslov index can be used to establish the instability of any pulse-type solution to the system \eqref{eq:RD}. The eigenvalues for the linearization of \eqref{eq:RD} about $\varphi_*(x)$ solve
\[
	\lambda v = D\partial_x^2 v + \nabla^2 F(\varphi^*(x))v,
\]
%where $\nabla^2 F$ denotes the Hessian of $F$, 
and so it suffices to prove that the operator
\begin{align}\lb{eq:evalue}
	H = - D\partial_x^2 - \nabla^2 F(\varphi^*(x))
\end{align}
has at least one negative eigenvalue. We first show that \eqref{eq:evalue} satisfies Hypothesis~\ref{h1}, where $V(x) = -\nabla^2 F(\varphi^*(x))$.

\begin{enumerate}
\item[(H1)] Since $F$ is $C^2$, the matrix $-\nabla^2 F(\varphi^*(x))$ is symmetric and continuous in $x$.
\item[(H2)] Since $|\varphi^*(x)|\to0$ as $|x|\to\infty$, the limits $\lim_{x\to\pm\infty} V(x) = -\nabla^2 F(0)$ exist. Moreover, $V_\pm = -\nabla^2 F(0) > 0$ because it was assumed that $Dk^2 - \nabla^2F(0) > 0$ for all $k \in \mathbb{R}$ (and in particular $k=0$).
%Note that this condition is in fact a consequence of the positivity of $-\nabla^2F(0)$. If $-\nabla^2F(0) > 0$, then 
%\[
%\langle (Dk^2 - \nabla^2F(0)) v, v \rangle = \langle Dk^2 v, v \rangle + \langle - \nabla^2F(0) v, v \rangle \geq \langle - \nabla^2F(0) v, v \rangle > 0,
%\]
%because $D > 0$.
\item[(H3)] The functions $V(x) - V_\pm = -\nabla^2 F(\varphi^*(x)) + \nabla^2 F(0)$ are in $L^1(\R_\pm; R^{n\times n})$ since $\varphi^*(x)$ approaches 0 exponentially fast as $|x| \to \infty$.  
\end{enumerate}
Thus Theorem~\ref{main-line-thm} applies, so the existence of a conjugate point is enough to guarantee instability.

Writing the eigenvalue equation $Hv = \lambda v$ as a first order system, we obtain
\begin{align}\lb{eq:RDfirst}
	\bp' = A(x,\lambda)\bp, \quad A(x,\lambda)=\begin{pmatrix}
0_n & D^{-1} \\
-\lambda I_n-\nabla^2 F(\varphi^*(x))&0_n
\end{pmatrix},
\end{align}
where $p_1=v$, $p_2=Dv'$ and $\bp=(p_1,p_2)^\top \in \R^{2n}$, as in Section~\ref{s:intro}. By definition, conjugate points are intersections of $\Phi_s(Y_{0})$ with the Dirichlet subspace $\mathcal D$, where we recall from Section~\ref{s:intro} that
%$$
%Y_{s,0} = \left\{\bp\in H^2((-\infty,s];\R^n)\times H^1((-\infty,s];\R^n): \bp'=A(x,0)\bp\right\},
%$$
%$$\Phi_s(Y_{0}) = \left\{\bp(s)\in\R^{2n} : \bp \in Y_{0}\right\},
%$$
%
\[
	\Phi_s(Y_0) = \left\{\bp(s)  : \bp'=A(x,0)\bp \ \ \text{ and } \lim_{x\to-\infty} \bp(x) = 0 \right\}
\]
and $
\mathcal D = \{ (p_1,p_2)^\top\in \R^{2n}\,\big|\, p_1=0 \}.
$
Letting $\mathbb E^u_-(x,0)$ denote the unstable subspace for \eqref{eq:RDfirst} with $\lambda=0$ coming from $x= -\infty$, it is clear that $\Phi_s(Y_{0}) = \mathbb E^u_-(s,0)$ for any $s<+\infty$. Thus our task is to find a basis for $\mathbb E^u_-(x,0)$, and then count its intersections with $\mathcal D$. Since $\mathbb E^u_-(s,0)$ is Lagrangian, by Theorem \ref{th:lagrange}, it is $n$-dimensional.

Differentiating \eqref{eq:RD} with respect to $x$, we find that $(\varphi_x^*(x),\varphi_{xx}^*(x))^\top$ is a solution to \eqref{eq:RDfirst} with $\lambda=0$; thus $(\varphi_x^*(x),\varphi_{xx}^*(x))^\top\in\mathbb E^u_-(x,0)$. Let $(v_j(x),\partial_x v_j(x))^\top$, $j\in\{1,\ldots,n-1\}$, denote the remaining $n-1$ basis vectors for $\mathbb E_-^u(x,0)$, which are unknown. Denoting the $i$th component of $\varphi^*(x)$ by $\varphi_i^*(x)$ and the $i$th component of $v_j(x)$ by $v_{j,i}(x)$, we have
\[
\mathbb E^u_-(x,0) = \mbox{span}\left\{ \begin{pmatrix} \partial_x\varphi^*_1(x) \\\vdots\\ \partial_x\varphi^*_n(x)\\ \partial_{xx}\varphi^*_1(x) \\\vdots\\ \partial_{xx}\varphi^*_n(x) \end{pmatrix}, \begin{pmatrix} v_{1,1}(x)\\\vdots \\v_{1,n}(x)\\\partial_x v_{1,1}(x)\\\vdots\\\partial_x v_{1,n}(x)\end{pmatrix},\ldots, \begin{pmatrix} v_{n-1,1}(x)\\\vdots \\v_{n-1,n}(x)\\\partial_x v_{n-1,1}(x)\\\vdots\\\partial_x v_{n-1,n}(x)\end{pmatrix}\right\}.
\]
Then finding intersections with $\mathcal D$ reduces to finding values of $s$ so that
\begin{align}\lb{intersection}
\det\begin{pmatrix}
\partial_x\varphi^*_1(s) & v_{1,1}(s) & \cdots & v_{n-1,1}(s)\\\vdots&\vdots&\ddots&\vdots\\ \partial_x\varphi^*_n(s) & v_{1,n}(s) & \cdots & v_{n-1,n}(s)
\end{pmatrix} = 0.
\end{align}
We emphasize that the monotonicity along $\Gamma_{2,L} = \{(\lambda,s)\in \{0\}\times[-\infty,L]\}$ (Lemma~\ref{lem:scross}) implies any such intersection must be non-degenerate.
Unlike the scalar case, it is not immediately clear that simply knowing $\varphi^*(x)$ is a pulse provides enough information to conclude that there exists an $s$ satisfying \eqref{intersection} since, in general, it is not obvious how to find the vectors $v_j(x)$. However, if we can find an $x_0$ so that all of the derivatives $\partial_x\varphi_i^*(x_0)$ are \emph{simultaneously} zero, then \eqref{intersection} is satisfied for $s=x_0$, regardless of the vectors $v_j(x_0)$. We will show that such an $x_0$ exists by showing that the original pulse solution $\varphi^*(x)$ is even-symmetric about some $x_0$. We make the following assumption.

\begin{hypothesis}\lb{h:unique}
Consider the first-order system of equations describing stationary solutions to \eqref{eq:RD} 
%\begin{align}\lb{eq:firstRD}
%\begin{pmatrix}
%u\\v
%\end{pmatrix}_x = \begin{pmatrix} 
%D^{-1}v\\ -G(u)
%\end{pmatrix}
%\end{align}
\begin{align}\lb{eq:firstRD}
\begin{split}
	u_x &= D^{-1} v \\
	v_x &= -G(u)
\end{split}
\end{align}
and let $\mathcal W^s(x)$ and $\mathcal W^u(x)$ denote the stable and unstable manifolds of $(u,v)^\top = 0$ associated with \eqref{eq:firstRD}. 
We assume that $(\varphi^*(x),\varphi_{x}^*(x))^\top$ is the unique solution, up to spatial translation, contained in the intersection $\mathcal W^s(x)\cap\mathcal W^u(x)$.
\end{hypothesis}

\begin{remark}
Since we assume that $\varphi^*(x)$ is a pulse solution to \eqref{eq:RD}, $\dim(\mathcal W^s(x)\cap\mathcal W^u(x))\ge1$. The assumption that this dimension is exactly equal to one is generic for the following reason. We append the $x$ direction so that the manifolds $\mathcal W^s(x)$ and $\mathcal W^u(x)$ are $n+1$ dimensional manifolds in a $2n+1$ dimensional ambient space. Then it is a well known fact of differential topology that the dimension of a transverse intersection of two manifolds $X$ and $Z$ in the ambient space $Y$ is given by
$$
\dim(X\cap Z) = \dim(X)+\dim(Z) - \dim(Y)
$$
(c.f. \cite[pg. 30]{GP}) which in our case gives $$\dim(\mathcal W^s(x)\cap\mathcal W^u(x)) = (n+1)+(n+1) - (2n+1) = 1.$$ 
\end{remark}

The even-symmetry of $\varphi^*(x)$ is a straightforward consequence of Hypothesis~\ref{h:unique} and the spatial-reversibility of \eqref{eq:RD}.
\begin{proposition}\lb{pp:symmetric}
Assume Hypothesis~\ref{h:unique}. Then there exists some $x_0\in\R$ so that $\varphi^*(x)$ is even-symmetric about $x=x_0$.
\end{proposition}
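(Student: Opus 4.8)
The plan is to exploit the spatial reversibility of the stationary system \eqref{eq:firstRD} together with the uniqueness in Hypothesis~\ref{h:unique}. The key observation is that the map $(u,v)^\top(x)\mapsto(u(-x),-v(-x))^\top$ sends solutions of \eqref{eq:firstRD} to solutions: if $u_x = D^{-1}v$ and $v_x = -G(u)$, then $\widetilde u(x):=u(-x)$, $\widetilde v(x):=-v(-x)$ satisfy $\widetilde u_x = -u_x(-x) = -D^{-1}v(-x) = D^{-1}\widetilde v$ and $\widetilde v_x = v_x(-x) = -G(u(-x)) = -G(\widetilde u)$. The underlying involution $R(u,v)^\top=(u,-v)^\top$ fixes the equilibrium $(0,0)^\top$, and because reversing $x$ interchanges forward and backward limits, $R$ maps $\mathcal W^s(x)$ to $\mathcal W^u(x)$ and vice versa. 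Consequently the homoclinic set $\mathcal W^s(x)\cap\mathcal W^u(x)$ is invariant under this reversal, which sends homoclinic orbits to homoclinic orbits.

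Applying this to the pulse, the curve $\big(\varphi^*(-x),\,-\varphi_x^*(-x)\big)^\top$ is again a solution lying in $\mathcal W^s(x)\cap\mathcal W^u(x)$. By Hypothesis~\ref{h:unique} it must therefore coincide with a spatial translate of $\big(\varphi^*(x),\varphi_x^*(x)\big)^\top$; that is, there exists $c\in\R$ such that
\[
	\varphi^*(-x)=\varphi^*(x-c)\qquad\text{for all }x\in\R,
\]
the matching of the second components being automatic upon differentiation. Setting $x_0:=-c/2$ and substituting $x\mapsto -x_0-t$ in this identity yields $\varphi^*(x_0+t)=\varphi^*(-x_0-t-c)=\varphi^*(x_0-t)$ for every $t\in\R$, which is precisely even symmetry about $x_0$.

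The argument is essentially bookkeeping once the reversibility is set up, so I do not expect a serious obstacle. The one point deserving care is the justification that the reflected curve genuinely lies in $\mathcal W^s(x)\cap\mathcal W^u(x)$---equivalently, that it is still homoclinic to $0$---which follows because $0$ is a hyperbolic, $R$-invariant equilibrium; and the legitimacy of invoking ``unique up to spatial translation'' to pass from equality of orbit sets to equality of solutions up to a shift of the independent variable, which is exactly what Hypothesis~\ref{h:unique} provides. Everything else is a one-line computation.
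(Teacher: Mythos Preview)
Your proposal is correct and follows essentially the same route as the paper: use reversibility to see that the reflected solution is again a homoclinic, invoke Hypothesis~\ref{h:unique} to get $\varphi^*(-x)=\varphi^*(x-c)$, and read off the axis of symmetry. Your version is slightly more explicit in that you work at the level of the first-order system \eqref{eq:firstRD} and verify the involution $R(u,v)^\top=(u,-v)^\top$ directly, whereas the paper simply notes that $u(x)\mapsto u(-x)$ is a symmetry of \eqref{eq:RD}; the content is the same.
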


\begin{proof}
Equation \eqref{eq:RD} is reversible; i.e. if $u(x)$ is a solution, so is $u(-x)$. Since $\varphi^*(x)$ is a solution to \eqref{eq:RD}, so is $\varphi^*(-x)$ and, by the definition of a pulse, both solutions are contained in the intersection $\mathcal W^s(x)\cap\mathcal W^u(x)$. By Hypothesis~\ref{h:unique}, $\varphi^*(x)$ and $\varphi^*(-x)$ are the same up to spatial translations. This can only be true if $\varphi^*(x)$ is even-symmetric about some point $x_0$. More precisely, if $\varphi^*(x) = \varphi^*(-x + \delta)$ for all $x \in \mathbb{R}$ and some fixed $\delta$, then $\varphi^*(x_0 + x) = \varphi^*(x_0-x)$ for all $x \in \mathbb{R}$, where $x_0 = \delta/2$.
\end{proof}

We are now ready to prove our main result.

\begin{theorem}\lb{th:example}
Assume Hypothesis~\ref{h:unique}. Then $\varphi^*(x)$ is unstable. 
\end{theorem}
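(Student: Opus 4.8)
The plan is to read off the instability directly from Proposition~\ref{pp:symmetric} together with Theorem~\ref{main-line-thm}; essentially all of the work has already been done, and what remains is to exhibit a single conjugate point. First I would use Proposition~\ref{pp:symmetric} to fix a point $x_0\in\R$ about which $\varphi^*$ is even-symmetric, i.e. $\varphi^*(x_0+x)=\varphi^*(x_0-x)$ for all $x$. Differentiating this identity in $x$ and evaluating at $x=0$ gives $\varphi^*_x(x_0)=0$; crucially, this kills \emph{all $n$ components} of $\partial_x\varphi^*$ simultaneously at the \emph{same} point $x_0$, which is precisely the information \eqref{intersection} requires and which is not available in general without the symmetry.

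Next I would check that $x_0$ is a conjugate point in the sense of Definition~\ref{def:cpev}. Differentiating \eqref{eq:RD} in $x$ shows that $(\varphi^*_x(x),\varphi^*_{xx}(x))^\top$ solves the first-order system \eqref{eq:RDfirst} with $\lambda=0$ and decays as $x\to-\infty$, so it lies in $\Phi_x(Y_0)=\mathbb E^u_-(x,0)$ for every $x$. At $x=x_0$ its first block vanishes, hence $(0,\varphi^*_{xx}(x_0))^\top\in\Phi_{x_0}(Y_0)\cap\mathcal D$. This vector is nonzero: if $\varphi^*_{xx}(x_0)$ also vanished, then $(\varphi^*_x,\varphi^*_{xx})$ would be a solution of the linear system \eqref{eq:RDfirst} with vanishing data at $x_0$, forcing $\varphi^*_x\equiv0$ and therefore $\varphi^*\equiv0$, contradicting the assumption that $\varphi^*$ is a (nontrivial) pulse. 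Equivalently, $s=x_0$ solves the determinant condition \eqref{intersection} with a genuinely nontrivial kernel vector. Thus $\Phi_{x_0}(Y_0)\cap\mathcal D\neq\{0\}$, so $x_0$ is a conjugate point.

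Finally I would invoke Theorem~\ref{main-line-thm}. As verified just before the statement of Theorem~\ref{th:example}, the operator $H=-D\partial_x^2-\nabla^2F(\varphi^*(x))$ satisfies Hypothesis~\ref{h1} with $V(x)=-\nabla^2F(\varphi^*(x))$, so Theorem~\ref{main-line-thm} gives $\mo(H)=\#\,\text{conjugate points in }(-\infty,+\infty)$. Since we have exhibited one such point, $\mo(H)\ge1$, so $H$ has at least one negative eigenvalue; equivalently $\mathcal L=-H=D\partial_x^2+\nabla^2F(\varphi^*(x))$ has an eigenvalue in the open right half-plane. By the standard fact that spectral instability implies nonlinear instability for reaction--diffusion equations \cite{SS98}, $\varphi^*$ is unstable. (One may note that the monotonicity in Lemma~\ref{lem:scross} is what guarantees this single conjugate point is not cancelled by others in the Maslov count, but since Theorem~\ref{main-line-thm} already states the Morse index \emph{equals} the number of conjugate points, even one suffices.)

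The proof has essentially no obstacle of its own: the only subtleties are the two nondegeneracy points—that the symmetry forces the full vector $\partial_x\varphi^*$, not just its individual components, to vanish at a common $x_0$ (supplied by Proposition~\ref{pp:symmetric}), and that $\varphi^*_{xx}(x_0)\neq0$ so the intersection with $\mathcal D$ is nontrivial (immediate from uniqueness for the linear ODE and $\varphi^*\not\equiv0$). All the genuine content is contained in the previously established results: the Morse-index/conjugate-point correspondence (Theorems~\ref{th:mas2} and~\ref{main-line-thm}), the monotonicity of crossings (Lemma~\ref{lem:scross}), and the reversibility-plus-uniqueness symmetry argument (Proposition~\ref{pp:symmetric}).
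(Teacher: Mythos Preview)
Your proposal is correct and follows essentially the same approach as the paper: use Proposition~\ref{pp:symmetric} to obtain $\partial_x\varphi^*(x_0)=0$, recognize this as producing a conjugate point via \eqref{intersection}, and apply Theorem~\ref{main-line-thm}. Your version is in fact more carefully written than the paper's two-line proof, as you explicitly verify that $\varphi^*_{xx}(x_0)\neq 0$ (so the intersection with $\mathcal D$ is genuinely nontrivial) and spell out the invocation of Theorem~\ref{main-line-thm}, both of which the paper leaves implicit.
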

\begin{proof}
By Proposition~\ref{pp:symmetric}, $\varphi^*(x)$ is even-symmetric about some $x_0$; thus $$\partial_x\varphi^*(x)\Big|_{x=x_0} = 0$$ and so \eqref{intersection} is satisfied for $s=x_0$. 
\end{proof}

\begin{remark}
It was assumed as the start of the section that the essential spectrum of the linearized operator was stable; this allowed us to verify Hypothesis \ref{h1} and thus apply Theorem~\ref{main-line-thm} to conclude instability of the pulse. On the other hand, if $-\nabla^2F(0)$ has any negative eigenvalues, the pulse is unstable due to the essential spectrum.
\end{remark}

\begin{remark}
The proof of Theorem \ref{th:example} only requires the even-symmetry of $\varphi^*$ (which may be valid even if Hypothesis \ref{h:unique} does not hold, or cannot be verified). We choose to state the result in terms of Hypothesis \ref{h:unique}, rather than its consequence, Proposition \ref{pp:symmetric}, as it is less apparent that the latter condition is generically satisfied.
\end{remark}

\section*{Acknowledgments} The authors wish to thank the American Institute of Mathematics (AIM) in San Jose, CA, and their SQuaRE program, where much of this work was conducted. M.B. would like to thank Arnd Scheel for suggesting the potential for using the Maslov index to investigate the stability of Turing patterns. The work of M.B. was partly supported by US National Science Foundation (NSF) grant DMS--1411460. The work of C.J. was partly supported by US NSF grant DMS--1312906. Y.L. was supported by the US NSF grant DMS--1710989, by the Research Board and Research Council of the University of Missouri, and by the Simons Foundation.

%%%%

\bibliographystyle{plain}
\bibliography{bcjlms}
 
%%%%

\end{document}